\documentclass[preprint,12pt,3p]{elsarticle}

\usepackage{amssymb}
\usepackage{amsmath,amsthm}
\usepackage{mathrsfs}
\usepackage{hyperref}
\usepackage{cleveref}
\usepackage[all,cmtip]{xy}
\usepackage{epsf, graphicx}
\usepackage{latexsym,amsfonts,amsbsy,amssymb}
\usepackage{geometry}
\usepackage{titletoc}
\usepackage{color,xcolor}
\usepackage{rotating}
\usepackage{algorithm}
\usepackage{algorithmic}
\usepackage{amscd}

\makeatletter

\@addtoreset{equation}{section} \makeatother
\newtheorem{theorem}{Theorem}[section]
\newtheorem{lemma}[theorem]{Lemma}
\newtheorem{notation}[theorem]{Notation}

\newtheorem{remark}[theorem]{Remark}
\newtheorem{void}[theorem]{}
\newtheorem{definition}[theorem]{Definition}
\newtheorem{proposition}[theorem]{Proposition}
\newtheorem{statement}[theorem]{Statement}

\def\IBr{{\rm IBr}}
\def\Irr{{\rm Irr}}
\def\Ind{{\rm Ind}}
\def\Res{{\rm Res}}

\def\End{{\rm End}}

\def\Id{{\rm Id}}

\def\O{\mathcal{O}}

\def\F{\mathbb{F}}

\def\Z{\mathbb{Z}}

\def\Tr{{\rm Tr}}

\def\SU{{\rm SU}}
\def\SL{{\rm SL}}
\def\GL{{\rm GL}}

\makeatletter
\def\ps@pprintTitle{%
\let\@oddhead\@empty
\let\@evenhead\@empty
\def\@oddfoot{\reset@font\hfil\thepage\hfil}
\let\@evenfoot\@oddfoot
}
\makeatother

\begin{document}

\begin{frontmatter}

\title{Morita equivalent blocks with an endotrivial source that are not globally isotypic}

\author{Xin Huang}


\begin{abstract}
Let $(K,\mathcal{O},k)$ be a large enough $2$-modular system. We show that there is a block algebra $B$ of $\mathcal{O}{\rm SU}_3(2)$ which is Morita equivalent to $\mathcal{O} Q_8$ via a bimodule with a $3$-dimensional endotrivial source. We also prove that $B$ and $\mathcal{O} Q_8$ are not globally isotypic. Hence they are not $p$-permutation equivalent and not splendidly Rickard equivalent.

\end{abstract}

\begin{keyword}
finite group \sep nilpotent block  \sep endotrivial module \sep Morita equivalence \sep isotypy 
\end{keyword}

\end{frontmatter}


\section{Introduction}\label{s1}
Throughout this paper $p$ is a prime, $k$ is an algebraically closed field of characteristic $p$, and $\O$ is a complete discrete valuation ring with quotient field $K$ of characteristic $0$ and residue field $k$. Assume that $K$ is a splitting field for all finite groups considered below.

The notion of an isotypy was first defined by Brou\'{e} in \cite[D\'efinition 4.6]{Broue1990}. Brou\'{e}'s original definition asks for the compatibility of generalized decomposition maps with respect to centralizers of cyclic $p$-subgroup. In order to distinguish different notions, let us  call it a {\it cyclic isotypy}.  When studying $p$-permutation equivalences, stronger versions of isotypies were defined; see \cite[1.9 (b)]{BX}, \cite[§10]{Lin08} and \cite[Definition 15.3]{BP}. Note that the definitions of isotypies in \cite[§10]{Lin08} and \cite[Definition 15.3]{BP} are equivalent. In this paper, let us call the isotypy in \cite[§10]{Lin08} (or \cite[Definition 9.5.1]{Lin18b}) a {\it global isotypy}. The only known examples of globally isotypic blocks are $p$-permutation equivalent blocks and Galois conjugate blocks. 
In order to describe the connection between Morita equivalences with endopermutation source and global isotypies, the author defined the notion of an {\it almost global isotypy} (see \cite[Definition 1.3]{H26:isotypy}) which is slightly weaker than a global isotypy but restricts to a cyclic isotypy. 

In \cite{HZ} and \cite[Theorem 1.7]{H26:isotypy}, we proved that Morita equivalences with endopermutation source imply cyclic isotypies and almost global isotypies. In \cite[Proposition 1.5]{H26:isotypy}, we proved that if a bimodule induces a Morita equivalence between two blocks and has a $3$-dimensional endotrivial $\O Q_8$-source, then this bimodule together with its slashed modules cannot give an global isotypy. However, as we remarked in \cite[Remark 1.6]{H26:isotypy}, a global isotypy is not necessarily coming from lifting slahsed modules. On the other hand, we didn't give an example of two specific groups and blocks to support the situation of \cite[Proposition 1.5]{H26:isotypy}. So one of the motivations of this paper is to fix these two problems. 

In \cite[Th\'eorème 5.2]{Broue1990}, Brou\'{e} showed that a nilpotent block with a defect group $P$ is cyclic isotypic to $\O P$. When using Brou\'{e}'s result, one may forget that this is for only cyclic isotypies. For example, in \cite[Remark 11.3]{Lin08}, Linckelmann remarked that ``A nilpotent block with defect group $P$ is known to be (globally) isotypic to the defect group algebra $\O P$". The second motivation of this paper is to show that this is not true in general. However, by our result \cite[Theorem 1.7]{H26:isotypy}, a nilpotent block with a defect group $P$ is always almost globally isotypic to $\O P$. The main result of this paper is the following:

\begin{theorem}\label{theo:main}
Let $p=2$ and $G$ be the $2$-nilpotent group $\SU_3(2)\cong 3_{+}^{1+2} \rtimes Q_8$. There is a block algebra $B$ of $\O G$ satisfying both of the following:
\begin{enumerate}[{\rm (i)}]
	\item $B$ is Morita equivalent to $\O Q_8$ via a bimodule with an endotrivial source $V$ such that $\dim_k(k\otimes_\O V)=3$ and $k\otimes_\O V$ is not defined over the prime field $\F_2$.
	\item $B$ is not globally isotypic to $\O Q_8$. Hence they are not $p$-permutation equivalent and not splendidly Rickard equivalent. 
\end{enumerate}
\end{theorem}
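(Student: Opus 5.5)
The plan is to realize $B$ as a nilpotent block of $\O G$ and to use the structure of $G=N\rtimes Q_8$ with $N=3_+^{1+2}$. Since $G$ is $2$-nilpotent with normal $2'$-subgroup $N$, the blocks of $\O G$ correspond to $G$-orbits of irreducible characters $\theta$ of $N$. The block $B$ covering $\theta$ has defect group $Q_8$ exactly when $Q_8$ fixes $\theta$. Every block of $\O G$ is nilpotent, because $G$ is $2$-nilpotent. I would choose $\theta$ to be one of the two faithful characters of $N$, of degree $3$. These characters are determined by their central character on $Z(N)\cong C_3$, and $Q_8$ centralizes $Z(N)$ (since $Q_8\le \SL_2(3)$ acts on $N/Z(N)$ with determinant $1$), so $Q_8$ stabilizes $\theta$. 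Since $\gcd(3,8)=1$, $\theta$ extends to $G$, and the extension is unique once we fix its determinant to have odd order. Let $B=\O G e_\theta$. By Puig's theorem on nilpotent blocks, or concretely here by Clifford theory, $B\cong \End_\O(W)\otimes_\O \O Q_8$, where $W$ affords the canonical extension of $\theta$ restricted to $Q_8$. Therefore $B$ is Morita equivalent to $\O Q_8$ via the bimodule $M=W\otimes_\O\O Q_8$, viewed as a $B$-$\O Q_8$-bimodule, and its source is $V=\Res^G_{Q_8}(W)$.

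For (i), I would check that $V$ is a $3$-dimensional endotrivial $\O Q_8$-module. Its reduction $k\otimes_\O V$ has dimension $3$. It is endotrivial because $\theta$ restricted to $Q_8$ has odd degree and is the restriction of a $\Delta Q_8$-stable endopermutation source of a nilpotent block, and an indecomposable endopermutation module of dimension $3$ for $Q_8$ is endotrivial. Alternatively I would compute $V\otimes V^*\cong k\oplus(\text{projective})$ directly from the character values, $\theta(1)=3$ and $\theta(z)=-1$ on the central involution $z$. For the statement about the field of definition, the character of $V$ takes values involving $\sqrt{-3}$, or equivalently a primitive cube root of unity, on elements of order $4$ that act with trace $\pm\omega$-type values. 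Its Brauer character at elements of order $3$ is vacuous, so I would instead use the known classification (Carlson–Thévenaz) of endotrivial $kQ_8$-modules of dimension $3$: they lie in the $\Omega$-orbits not defined over $\F_2$ and are swapped by the Frobenius. Hence $k\otimes_\O V\not\cong$ its Galois twist, and so it is not defined over $\F_2$.

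For (ii), the main obstacle is that a global isotypy need not arise from slashed modules, so I cannot just invoke \cite[Proposition 1.5]{H26:isotypy}. Instead I would argue directly from the definition of a global isotypy: a perfect character $\mu$ together with compatible characters $\mu_Q$ of $C_G(Q)\times C_{Q_8}(Q)$ for all subgroups $Q$ of $Q_8$. I would take $Q=Q_8$ and $Q=C_4$, where $C_G(Q_8)=Z(N)\times Z(Q_8)$ and $C_G(C_4)=Z(N)\times C_4$ (assuming the action on $N/Z(N)$ is fixed-point-free). I would then determine all possible $\mu_Q$ up to sign: they must be perfect isometries between the corresponding Brauer correspondent blocks, so they are forced to be $\pm$ a twist by a linear character. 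The compatibility conditions then force $\mu$ to map the characters of $B$ to those of $\O Q_8$ in a way that preserves the values of the canonical extension on $2$-elements, up to a rational sign. However, $\theta$ restricted to $Q_8$ is not rational, and one can compare the values at an element of order $4$ (or use the Galois action). This leads to a contradiction, since $\mu$ would have to commute with Galois conjugation while the decomposition data of $B$ do not. Finally, global isotypies are implied by $p$-permutation equivalences and by splendid Rickard equivalences, which gives the last sentence of (ii).
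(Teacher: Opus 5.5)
The decisive gap is in (ii). Your contradiction is supposed to come from irrationality of the degree-$3$ character on $Q_8$ and from Galois conjugation. But every character of $Q_8$ is rational-valued: in fact $\rho_V=\delta+\vartheta$ with $\delta$ linear and $\vartheta$ the degree-$2$ character, so $\rho_V$ only takes the values $3,-1,\pm1$. The only irrationalities in $B$ come from the central character $\nu$ of $\theta$ on $Z(N)$, and $Z(N)$ is central in $G$. For $1\neq u\in P$ one has $C_G(u)=Z(N)\times C_P(u)$, and $\nu$ occurs both in the characters of $B$ and in the Brauer character of every correspondent block $\hat e_R$. Hence all generalized decomposition numbers of $B$ are rational integers; for $\Phi_M(\chi')$ at $u$ they equal $\rho_V(u)\chi'(u)$. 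Nor is there any reason for a perfect isometry to commute with Galois conjugation, which does not even stabilize $B$ (it sends it to the block covering $\bar\theta$). So your route yields no contradiction.

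What actually obstructs a global isotypy is a \emph{sign}: $\rho_V(z)=-1<0$, while $\rho_V(\alpha)\rho_V(\beta)\rho_V(\gamma)=\delta(\alpha)\delta(\beta)\delta(\gamma)=1$. The paper writes an arbitrary candidate as $\Psi_Q=\Phi_{\hat M_Q}\circ T_Q$. Here $(\Phi_{\hat M_Q})$ is the almost global isotypy coming from slashed modules, which is compatible only up to the signs $\omega_{\hat V_Q}(u)$. The map $T_Q$ is a perfect self-isometry of $\Z\Irr_K(C_P(Q))$, hence $\varepsilon_Q$ times a degree-preserving permutation. Evaluating compatibility at $1_P$:
the case $(Q,u)=(1,z)$ gives $\varepsilon_Z=-\varepsilon_1$;
the cases $Q=1$ and $Q=Z$ with $u\in\{\alpha,\beta,\gamma\}$ give $\varepsilon_{\langle u\rangle}=\varepsilon_1\delta(u)\lambda_1(u)=\varepsilon_Z\eta(u)\lambda_Z(u)$ with $\eta$ linear.
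Multiplying over $\alpha,\beta,\gamma$ then forces $\varepsilon_1=\varepsilon_Z$, a contradiction. In your explicit setting you could even bypass slashed modules: with $\Phi^0_R(\chi'')=\nu\times\chi''$ for $R\neq 1$, one computes, for $1\neq u\in P$, $d^u_{G,1}\circ\Phi_M=\rho_V(u)\,\bar\Phi^0_{\langle u\rangle}\circ d^u_{P,1}$ and $d^u_{G,Z}\circ\Phi^0_Z=\bar\Phi^0_{\langle u\rangle}\circ d^u_{P,Z}$. Your subgroup $Q=Q_8$ plays no role; the essential cases are $Q=1$ with $u=z$, and $Q=Z$.

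In (i) there are two further gaps. First, a $3$-dimensional indecomposable endopermutation $kQ_8$-module need not be endotrivial. For example, $\mathrm{Inf}_{P/Z}^{P}\,\Omega_{P/Z}(k)$ is one; its $\End_k$ is isomorphic to $k\oplus k[P/Z]^{\oplus 2}$, and $z$ acts trivially on it. Character values cannot settle the question either: if $L$ is an $\O P$-lattice affording $\vartheta$ and $\O_\delta$ affords $\delta$, then $\O_\delta\oplus L$ has character $\rho_V$, but $z$ acts trivially on its reduction since $-1\equiv 1$. What you need is that $z$ acts nontrivially on $k\otimes_\O V$. This holds, for instance, because $k\otimes_\O W$ is faithful: its kernel meets $N$ trivially, so it is a normal $2$-subgroup centralizing $N$, and $C_G(N)=Z(N)$. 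Alternatively, read it off directly from the matrix $R_0^2$. Then $\Res_Z(k\otimes_\O V)\cong k\oplus kZ$, and Chouinard's theorem applied to the trace-zero part of $\End_k(k\otimes_\O V)$ gives endotriviality, as in the paper.

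Second, your first argument for non-definability over $\F_2$ rests on the same false irrationality. The fallback to the Carlson--Th\'evenaz classification presupposes endotriviality and is heavy. Faithfulness, together with the fact that $Q_8$ does not embed in $\GL_3(\F_2)$ (whose Sylow $2$-subgroups are dihedral), settles it immediately. The construction $B\cong\End_\O(W)\otimes_\O\O Q_8$ from an extension of $\theta$ is fine and matches the paper's.
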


Statements (i) and (ii) will be proved in Sections \ref{sec:Proof of (i)} and \ref{sec:proof of ii} respectively, after reviewing the structure of the group $\SU_3(2)$ in Section \ref{sec:Structure of SU_3(2)}.

\medskip In April 2024, Markus Linckelmann said that it would be very interesting if the author could find two Morita equivalent blocks with an endopermutation source that are not globally isotypic. In December 2025, Yuanyang Zhou suggested that the author give an explicit Morita equivalence bimodule which has an exotic endotrivial module as a source. The author is very grateful to them for the suggestions and for helpful discussions. 

\medskip Since we will use the group $3_{+}^{\,1+2}$ for Theorem \ref{theo:main}, we take this opportunity to give a \textbf{counter-example} to the following statement, which was stated in the literature (see \cite{PZ,Biland,Bilandadv,H24}) without proof:

\begin{statement}\label{statement:Brauer pair problem}
Let $G$ be a finite group, $Q$ a $p$-subgroup of $G$ and $e$ a block of $kC_G(Q)$. For any subgroup $H$ of $G$ satisfying $C_G(Q)\leq H\leq N_G(Q,e)$, $e$ remains a block of $kH$.
\end{statement}	

Our counter-example (given in Section \ref{sec:counterexample}) uses also the group $3_{+}^{\,1+2}$ and shows that even if $Q$ is a Sylow $p$-subgroup of $G$, Statement \ref{statement:Brauer pair problem} could be \textbf{false}. Before discovering this counter-example, we consulted Robert Boltje about whether Statement \ref{statement:Brauer pair problem} is correct, and he told us that Philipp Perepelitsky had a counter-example to this statement long time ago which is unpublished. We thank Robert for this information. We remark that if $QC_G(Q)\leq H\leq N_G(Q,e)$, the statement is true by \cite[Theorem 6.2.6 (iii)]{Lin18b}, and if $C_G(Q)\leq H\leq QC_G(Q)$, its also true by \cite[Proposition 6.8.11]{Lin18b}. Even if Statement \ref{statement:Brauer pair problem} is not correct in general, it has no influence, because only the groups $C_G(Q)$, $QC_G(Q)$ and $N_G(Q,e)$ would be frequently used. The author would like to thank Conghui Li and Lin Wu for asking about Statement \ref{statement:Brauer pair problem} and for helpful discussions.

\medskip\noindent\textbf{Notation and terminology.} Let $G$ be a finite group. For any $g,h\in G$, we denote by $[g,h]$ the element $ghg^{-1}h^{-1}\in G$.  We denote by $\Delta G$ the diagonal subgroup $\{(g,g)\mid g\in G\}$ of $G\times G$.  For $R\in \{\O,k\}$, whenever useful, we view an $RG$-module as an $R\Delta G$-module and vice versa via the canonical isomorphism $\Delta G\cong G$. By a {\it block} of $RG$ we mean a primitive central idempotent $b$ of $RG$, and $RGb$ is called a {\it block algebra} of $RG$. For the terminology vertex, source, defect group, endotrivial module, endopermutation module, relative trace map, source idempotent/algebra, fusion system, and nilpotent block, we refer to the books \cite{Lin18a,Lin18b}.

\section{Structure of the group $\SU_3(2)$}\label{sec:Structure of SU_3(2)}

\begin{void}\label{void:two groups}	
{\rm Recall that the extraspecial group of order $27$ and exponent $3$ has presentation $$3_{+}^{\,1+2}=\langle x,y,z\mid x^3=y^3=z^3=1,\ [x,y]=z,\ [x,z]=[y,z]=1\rangle,$$
and the quaternion group of order $8$ has presentation
$$Q_8=\langle \alpha,\beta\mid \alpha^4=\beta^4=1,~~\alpha^2=\beta^2=[\alpha,\beta] \rangle.$$}
\end{void}

\begin{void}\label{void:SU_3(2)}
{\rm (i) We write the finite field $\F_4$ with $4$ elements as $\F_4=\{0,1,\omega,\omega^2\}$, where $\omega$ is a generator of the cyclic group $\F_4^\times$. Hence we have $\omega^2+\omega+1=0$ in $\F_4$. For an element $a\in \F_4$, we denote $a^2$ by $\bar{a}$. For a matrix $A=(a_{ij})_{1\leq i,j \leq 3}$ in $\GL_3(\F_4)$, we denote by  $\bar{A}$ the matrix $(\bar{a}_{ij})_{1\leq i,j\leq 3}$ and by $A^T$ the matrix $(a_{ji})_{1\leq i,j\leq 3}$. The identity element of $\GL_3(4)$ is denoted by $I$. With this notation,
$$\SU_3(2):=\left\{A\in\SL_3(\F_4)\mid \bar{A}^{T}A=I\right\}.$$

(ii) Let
$$X=
\begin{pmatrix}
	1&0&0\\
	0&\omega&0\\
	0&0&\omega^2
\end{pmatrix},
~~~Y=
\begin{pmatrix}
	0&1&0\\
	0&0&1\\
	1&0&0
\end{pmatrix},~~~{\rm and}~~~
Z=\omega^2 I$$
be elements in $\GL_3(\F_4)$.
One checks that $X,Y,Z\in \SU_3(2)$, $X^3=Y^3=Z^3=I$, $[X,Y]=Z$ and $Z\in Z(\SU_3(2))$.
Hence $\langle X,Y\rangle$ is a subgroup of $\SU_3(2)$ which is isomorphic to $3_{+}^{\,1+2}$. Since $|SU_3(2)|=216$, $\langle X,Y\rangle$ is a Sylow $3$-subgroup of $\SU_3(2)$. Moreover $\langle X,Y\rangle$ is normal in $\SU_3(2)$. (One can use group-theoretic arguments to show that $\SU_3(2)$ has a unique Sylow $3$-subgroup by noting that $|\SU_3(2)|=216$ and $|Z(\SU_3(2))|=3$; here we omit this.)

(iii) Let $$J=\begin{pmatrix}
	0&0&1\\
	0&1&0\\
	1&0&0
\end{pmatrix}~~~{\rm and}~~~C=
\begin{pmatrix}
	1&1&0\\
	1&1&1\\
	0&1&1
\end{pmatrix}$$ be elements in $\GL_3(\F_4)$. Then we have $\bar{C}^TC=J$. Noting that $\bar{C}^T=C$ and $C^4=J^2=I$, we have 
$$C\cdot\SU_3(2)\cdot C^{-1}=\left\{A\in\SL_3(\F_4)\mid \bar{A}^{T}JA=J\right\}.$$

(iv) For any $a,b\in \F_4$, we write $u(a,b):=
\begin{pmatrix}
	1&a&b\\
	0&1&a^2\\
	0&0&1
\end{pmatrix}$. A straightforward calculation shows that $u(a,b)\in C\cdot \SU_3(2) \cdot C^{-1}$ if and only if $b^2+b=a^3$. Let $P=\{u(a,b)\mid a^3=b^2+b\}$. Then $P$ is a subgroup of $C\cdot \SU_3(2)\cdot C^{-1}$. The equality $a^3=b^2+b$ implies that if $a=0$, then $b\in \{0,1\}$ and if $a\neq 0$, then $b\in \{\omega,\omega^2\}$.  Hence $|P|=8$ and $P$ is a Sylow $2$-subgroup of $C\cdot \SU_3(2)\cdot C^{-1}$. Let
$$
R_0=u(1,\omega^2)=
\begin{pmatrix}
	1&1&\omega^2\\
	0&1&1\\
	0&0&1
\end{pmatrix},~~~{\rm and}~~~
S_0=u(\omega^2,\omega)=
\begin{pmatrix}
	1&\omega^2&\omega\\
	0&1&\omega\\
	0&0&1
\end{pmatrix}.$$
One checks that 
$$R_0^2=S_0^2=
\begin{pmatrix}
	1&0&1\\
	0&1&0\\
	0&0&1
\end{pmatrix}=[R_0,S_0].$$
Hence $P=\langle R_0,S_0\rangle \cong Q_8$. 

(v) Let $G=C\cdot\SU_3(2)\cdot C^{-1}$. Let $X_0=CXC^{-1}$, $Y_0=CYC^{-1}$ and let 
$$N=\langle X_0,Y_0\rangle=C\cdot\langle X,Y\rangle \cdot C^{-1}.$$
According to (ii), $N$ is the unique normal Sylow $3$-subgroup of $G$ and $N\cong 3_{+}^{\,1+2}$. Since $|G|=216$, $P$ is a Sylow $2$-subgroup of $G$ and we have 
$$\SU_3(2)\cong G=NP\cong 3_{+}^{\,1+2}\rtimes Q_8.$$
}
\end{void}

\section{Proof of Theorem \ref{theo:main} (i)}\label{sec:Proof of (i)}
Throughout this section we assume that $p=2$. To prove Theorem \ref{theo:main}, it suffices to consider the group $C\cdot \SU_3(2)\cdot C^{-1}$ instead of $\SU_3(2)$. We divide the proof into several propositions. 

\begin{proposition}\label{prop:simple kG-module}
Let $G=C\cdot \SU_3(2)\cdot C^{-1}$ as defined in \ref{void:SU_3(2)} (v). Let $\bar L=k\otimes_{\F_4}\F_4^3$ be the natural $k\GL_3(\F_4)$-module afforded by the matrix realization.  Then $\bar S:=\Res_G^{\GL_3(\F_4)}(\bar L)$ is a simple $kG$-module, and $\Res_N^G(\bar S)$ is a simple $kN$-module.
\end{proposition}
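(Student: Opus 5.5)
It suffices to prove that $\Res_N^G(\bar S)$ is simple. Any nonzero proper $kG$-submodule of $\bar S$ is in particular a nonzero proper $kN$-submodule, so simplicity over $N$ forces simplicity over $G$. Since conjugation by $C$ is an isomorphism $\SU_3(2)\to G$ carrying $\langle X,Y\rangle$ onto $N$, and $\bar S$ corresponds to the natural module of $\SU_3(2)$ twisted by $C$, we may equally work with $\langle X,Y\rangle$ acting on $k^3$ through the matrices $X$ and $Y$ of \ref{void:SU_3(2)} (ii). Simplicity is preserved under this transport of structure, because the $C$-conjugated representation of $N$ is isomorphic to the original representation of $\langle X,Y\rangle$ composed with the isomorphism.

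Since $|N|=27$ is odd and $k$ has characteristic $2$, $kN$ is semisimple by Maschke's theorem. The plan is therefore to show directly that $k^3$ has no nonzero proper $\langle X,Y\rangle$-invariant subspace. The matrix $X=\mathrm{diag}(1,\omega,\omega^2)$ has three distinct eigenvalues, because $1,\omega,\omega^2$ are distinct in $\F_4\subseteq k$. Hence every $X$-invariant subspace $W$ of $k^3$ is a sum of eigenlines, that is, spanned by some subset of the standard basis vectors $e_1,e_2,e_3$. The matrix $Y$ permutes $e_1,e_2,e_3$ cyclically, so $W$ must be spanned by a $Y$-stable subset of $\{e_1,e_2,e_3\}$. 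The only such subsets are $\emptyset$ and the whole set, so $W=0$ or $W=k^3$. Thus $\Res_N^G(\bar S)$ is simple, and so is $\bar S$.

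As a cross-check, one can argue with characters instead. The irreducible characters of $3_+^{1+2}$ of degree $3$ are exactly those that are nonzero only on the centre and restrict there to $3\lambda$ with $\lambda$ nontrivial. The Brauer character of $k^3$ (which equals an ordinary character, as $N$ is a $2'$-group) vanishes off the centre: $X$ has trace $1+\omega+\omega^2=0$, and every noncentral element has eigenvalues forming a scalar multiple of $\{1,\omega,\omega^2\}$. On the centre, $Z$ acts as the nontrivial scalar $\omega^2$. These facts identify $k^3$ with a degree-$3$ irreducible character. There is no real obstacle here; the only points needing care are that the eigenvalues of $X$ are distinct in $k$, and that the irreducibility argument is transported correctly through conjugation by $C$, which is immediate.
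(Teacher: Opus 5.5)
Your proof is correct and is essentially the paper's argument: you reduce to $\langle X,Y\rangle$ via conjugation by $C$, use that $X$ has three distinct eigenvalues so that every invariant subspace is spanned by coordinate vectors, and use that $Y$ permutes these vectors transitively. The appeal to Maschke's theorem and the character-theoretic cross-check are not needed, but they do no harm.
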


\begin{proof}
It suffices to show that $\Res_N^G(\bar S)=\Res_N^{\GL_3(\F_4)}(\bar L)$ is simple. Since $N=C\cdot \langle X,Y\rangle \cdot C^{-1}$, it suffices to show that $\Res_{\langle X,Y\rangle}^{\GL_3(\F_4)}(\bar L)$ is simple.  The three
eigenvalues $1,\omega,\omega^2$ of $X$ are distinct, so every
$X$-invariant subspace of $\bar L$ is spanned by a subset of the three coordinate
vectors.  The matrix $Y$ permutes these vectors transitively.  Consequently,
the only $\langle X,Y\rangle$-invariant subspaces are $0$ and $\bar L$. This completes the proof.
\end{proof}

\begin{proposition}\label{prop:restriction of simple kG-module}
Keep the notation of Proposition \ref{prop:simple kG-module}. Let $\bar V=\Res_P^G(\bar S)$. Then $\bar V$ is an indecomposable endotrivial $kP$-module with vertex $P$. 
\end{proposition}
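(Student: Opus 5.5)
The plan is to verify endotriviality first, using a dimension count together with Chouinard's theorem, and then to read off indecomposability and the vertex from dimension considerations. Throughout, write $z=R_0^2=S_0^2$ for the central involution of $P\cong Q_8$ (see \ref{void:SU_3(2)} (iv)), and recall that $\bar V$ is $3$-dimensional, being the restriction of the natural module.

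\textbf{Step 1: splitting off the trivial summand.} Since $\dim_k\bar V=3$ is odd, $p=2$ does not divide $\dim_k \bar V$. Consider the trace map $\End_k(\bar V)\to k$ and the unit map $k\to \End_k(\bar V)$, $1\mapsto \Id$. Both are $kP$-homomorphisms, and their composite is multiplication by $3\neq 0$ in $k$. Hence
$$\End_k(\bar V)\cong \bar V\otimes_k \bar V^*\cong k\oplus W$$
for some $kP$-module $W$ with $\dim_k W=8$. It then remains to show that $W$ is projective.

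\textbf{Step 2: projectivity of $W$ via Chouinard.} The only elementary abelian $2$-subgroups of $Q_8$ are $1$ and $Z(P)=\langle z\rangle$, since $z$ is the unique involution. By Chouinard's theorem, $W$ is projective if and only if $\Res_{\langle z\rangle}^P(W)$ is free. Now $z$ is the matrix $\begin{pmatrix}1&0&1\\0&1&0\\0&0&1\end{pmatrix}$, so $z-I$ has rank $1$. Therefore
$$\Res^P_{\langle z\rangle}(\bar V)\cong k\langle z\rangle\oplus k.$$
Since the dual of $k\langle z\rangle$ is again $k\langle z\rangle$, and tensor products with free modules are free, we get
$$\Res_{\langle z\rangle}(\bar V\otimes_k\bar V^*)\cong k\oplus(\text{free}).$$
By Krull--Schmidt, comparing this with $k\oplus \Res_{\langle z\rangle}(W)$ shows that $\Res_{\langle z\rangle}(W)$ is free. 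Hence $W$ is projective, and $\bar V$ is endotrivial.

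\textbf{Step 3: indecomposability and vertex.} An endotrivial module has a unique non-projective indecomposable summand, and every other summand is projective. A nonzero projective $kP$-module has dimension divisible by $|P|=8>3$, so $\bar V$ has no projective summand. It is therefore indecomposable (and non-projective). For the vertex, Green's indecomposability theory implies that an indecomposable module with vertex $Q$ has dimension divisible by $|P:Q|$. Since $3$ is odd, we must have $Q=P$.

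The only step requiring real input is Step 2, namely the appeal to Chouinard's theorem (equivalently, Carlson's detection of endotriviality on elementary abelian subgroups). For $Q_8$ this reduces everything to the single rank computation for $z$, which is immediate from \ref{void:SU_3(2)} (iv).
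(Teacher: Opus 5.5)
Your proof is correct and follows essentially the same route as the paper: trace splitting $\End_k(\bar V)\cong k\oplus W$, the decomposition $\Res_{\langle z\rangle}^P(\bar V)\cong k\oplus k\langle z\rangle$, Krull--Schmidt cancellation, and Chouinard's criterion applied to the unique nontrivial elementary abelian subgroup $Z(P)$, followed by a dimension argument for indecomposability and Green's divisibility theorem for the vertex. The differences are cosmetic. The paper phrases indecomposability via the stable endomorphism ring being $k\cdot\Id_{\bar V}$, which is the fact underlying your ``unique non-projective summand'' property. It also records $W\cong kP$ by a dimension count; this is used later for the lift to characteristic $0$ but is not needed for this statement.
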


\begin{proof}
Let $t=R_0^2=S_0^2$.  From \ref{void:SU_3(2)} (iv) we have
$$t=
	\begin{pmatrix}
		1&0&1\\
		0&1&0\\
		0&0&1
	\end{pmatrix}~~~{\rm and}~~~Z(P)=\langle t\rangle.$$
Consider the regular left $kZ(P)$-module $kZ(P)$. Using elementary linear algebra, one easily shows that
\begin{equation*}
	\Res_{Z(P)}^P(\bar V)\cong k\oplus kZ(P).
\end{equation*}
By \cite[page 43, Lemma 4]{Alperin}, for any finite-dimensional
$kZ(P)$-module $\bar U$, 
$kZ(P)\otimes_k \bar U$ is free as a $kZ(P)$-module. Hence we obtain
\[\begin{split}
	\Res_{Z(P)}^P\End_k(\bar V)
	&\cong (k\oplus kZ(P))\otimes_k(k\oplus kZ(P))^*\\
	&\cong k\oplus (kZ(P))^{\oplus 4}.
\end{split}
\]
Consider the composition of the inclusion map $k\cdot\Id_{\bar V}\hookrightarrow\End_k(\bar V)$ and the trace map $\Tr_{\bar V}:\End_k(\bar V)\to k$ sending a linear transformation of $\bar V$ to its trace. Both maps are homomorphisms of $kP$-modules. Since $\dim_k(\bar V)=3$ is invertible in $k$, we have
$$\End_k(\bar V)=k\cdot\Id_{\bar V}\oplus \bar U,$$
as $kP$-modules, where $\bar U=\ker(\Tr_{\bar V})$.
Krull--Schmidt cancellation in the preceding restriction gives
$$\Res_{Z(P)}^P(\bar U)\cong(kZ(P))^{\oplus4}.$$
Note that the only nontrivial elementary abelian $2$-subgroup of $P\cong Q_8$ is
$Z(P)$.  Therefore, by Chouinard's criterion \cite[Theorem 5.2.4]{Benson}, $\bar U$ is projective as a $kP$-module.  Since $\dim_k(\bar U)=8$ and $kP$ is indecomposable, it follows that $\bar U\cong kP$.  We have proved
\begin{equation}\label{eq:end-w}
	\End_k(\bar V)\cong k\oplus kP
\end{equation}
as $kP$-modules; in particular, $\bar V$ is endotrivial.  Moreover, (\ref{eq:end-w}) implies that the stable
endomorphism ring of $\bar V$ is $k\cdot \Id_{\bar V}$.  As $\dim_k(\bar V)<|P|$, the module $\bar V$ has no
nonzero projective summand, and hence it is indecomposable. Green's divisibility theorem, together with $\dim_k(\bar V)=3$, now shows that $P$ is the vertex of $\bar V$.
\end{proof}

\begin{proposition}\label{prop:source of simple kG-module}
Keep the notation of Propositions \ref{prop:simple kG-module}, \ref{prop:restriction of simple kG-module}. The $kP$-module $\bar V$ is a source of the simple $kG$-module $\bar S$. 
\end{proposition}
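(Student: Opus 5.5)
Since $P$ is a Sylow $2$-subgroup of $G$ and $\bar V=\Res^G_P(\bar S)$ is indecomposable with vertex $P$ by Proposition \ref{prop:restriction of simple kG-module}, the plan is to show that $\bar S$ has vertex $P$ and that $\bar V$ itself serves as a source. By definition, a source of $\bar S$ with respect to a vertex $Q$ is an indecomposable $kQ$-module $W$ such that $\bar S$ is a direct summand of $\Ind_Q^G(W)$. The vertex $Q$ may be chosen inside the Sylow subgroup $P$, and a source may be taken to be an indecomposable summand of $\Res^G_Q(\bar S)$.

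First, we will show that $\bar S$ is relatively $P$-projective. This holds automatically because $[G:P]=27$ is odd, so $\bar S$ is a direct summand of $\Ind_P^G\Res_P^G(\bar S)=\Ind_P^G(\bar V)$. Explicitly, the map $\Ind_P^G\Res^G_P\bar S\to\bar S$, $g\otimes s\mapsto gs$, is split by $s\mapsto \frac{1}{[G:P]}\sum_{g\in G/P} g\otimes g^{-1}s$.

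Next, we locate a vertex $Q\le P$ of $\bar S$. Since $\bar S$ is a summand of $\Ind_Q^G(W)$, restricting to $P$ and applying Mackey's formula shows that $\Res^G_P(\bar S)=\bar V$ is a summand of $\bigoplus_{g}\Ind^P_{P\cap {}^gQ}(\cdots)$. Because $\bar V$ is indecomposable, it is a summand of a single term, so $\bar V$ is relatively $(P\cap{}^gQ)$-projective. The vertex of $\bar V$ is $P$, so $P\le{}^gQ$, and by order considerations $Q={}^{g^{-1}}P$, which is again a Sylow subgroup. Hence $P$ is a vertex of $\bar S$.

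Finally, we identify the source. Since $\bar S$ is a summand of $\Ind_P^G(\bar V)$ and $\bar V$ is an indecomposable $kP$-module, $\bar V$ is a source of $\bar S$ with respect to the vertex $P$. Alternatively, a source $W$ is a summand of $\Res^G_P\bar S=\bar V$, and indecomposability forces $W\cong\bar V$. There is no serious obstacle here: the only step requiring care is the Mackey argument that pins the vertex to $P$. A quicker alternative is Green's divisibility: $\dim\bar S=3$ is odd, so a vertex must be a Sylow $2$-subgroup.
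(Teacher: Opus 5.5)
Your proposal is correct and follows the paper's approach: the paper applies Higman's criterion via $\Tr_P^G(\Id_{\bar V})=27\cdot\Id_{\bar S}$, and your explicit splitting of $\Ind_P^G\Res_P^G\bar S\to\bar S$ is the concrete form of the same argument. Your Mackey (or Green divisibility) check that $\bar S$ has vertex $P$ spells out a step the paper only implies with ``since $\bar V$ is indecomposable with vertex $P$''.
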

\begin{proof} 
Let $\Tr_P^G$ be the relative trace map $$\End_{kP}(\bar V)=\End_{kP}(\Res_P^G(\bar S))\to \End_{kG}(\bar S).$$
Since $[G:P]=|N|=27$ is invertible in $k$ and
$\Tr_P^G(\Id_{\bar V})=[G:P]\cdot\Id_{\bar S}=27\cdot\Id_{\bar S}$,
Higman's criterion \cite[Theorem 2.6.2 (ii),(v)]{Lin18a} shows that $\bar S$ is a direct summand of
$\Ind_P^G(\bar V)$.  Since $\bar V$ is indecomposable with vertex $P$, it is a $kP$-source of $\bar S$.
\end{proof}

\begin{proposition}
The $kP$-module $\bar V$ is not defined over $\F_2$. 
	\end{proposition}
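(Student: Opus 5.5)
The plan is to compare $\bar V$ with its Frobenius twist. Let $\sigma$ be the Frobenius automorphism $c\mapsto c^2$ of $k$, and for a $kP$-module $\bar W$ let $\bar W^{\sigma}$ denote the twisted module $k\otimes_{\sigma,k}\bar W$. If $\bar W$ is given by matrices, then $\bar W^\sigma$ is given by the matrices obtained by squaring all entries.

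If $\bar V\cong k\otimes_{\F_2}W_0$ for some $\F_2P$-module $W_0$, then $\bar V$ can be realized by matrices with entries in $\F_2$. Hence $\bar V^\sigma\cong\bar V$. So it suffices to prove that $\bar V\not\cong \bar V^\sigma$. For this we use the explicit realization from \ref{void:SU_3(2)} (iv): the element $u(a,b)\in P$ acts on $k^3$ by the matrix $u(a,b)$. In $\bar V^\sigma$ the same element acts by $u(\bar a,\bar b)=u(a^2,b^2)$.

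Next we find intrinsic submodules of $\bar V$. For $a\neq 0$, the kernel of $u(a,b)-I$ is $\langle e_1\rangle$. Hence the fixed-point space is $\bar V^P=\langle e_1\rangle$, which is one-dimensional. Consequently $\mathrm{soc}(\bar V)=\langle e_1\rangle$, because the only simple $kP$-module is $k$. On $\bar V/\langle e_1\rangle$, with basis the images of $e_2,e_3$, the element $u(a,b)$ acts by $\begin{pmatrix}1&a^2\\0&1\end{pmatrix}$. Its fixed points are one-dimensional, so $\mathrm{soc}^2(\bar V)=\langle e_1,e_2\rangle$ is a canonical $2$-dimensional submodule. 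The same computation applies to $\bar V^\sigma$. Any isomorphism $\bar V\to \bar V^\sigma$ must therefore carry $\mathrm{soc}(\bar V)$ to $\mathrm{soc}(\bar V^\sigma)$ and $\mathrm{soc}^2(\bar V)$ to $\mathrm{soc}^2(\bar V^\sigma)$.

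The invariant we extract is the following. A uniserial-type $kP$-module $E$ of dimension $2$ with trivial socle and trivial top determines a group homomorphism $\phi_E\colon P\to(k,+)$ by $g\mapsto$ (upper-right matrix entry of $g$). This map is well defined up to multiplication by a scalar in $k^\times$, since changing basis within the flag only rescales it. Hence the line $k\phi_E\subseteq \mathrm{Hom}(P,k)$ is an isomorphism invariant of $E$.
\begin{itemize}
\item For $E=\mathrm{soc}^2(\bar V)$ we get $\phi(u(a,b))=a$.
\item For $E=\mathrm{soc}^2(\bar V^\sigma)$ we get $\phi(u(a,b))=a^2$.
\end{itemize}
An isomorphism $\bar V\cong\bar V^\sigma$ would force $a\mapsto a^2$ to be a constant multiple of $a\mapsto a$ on the set of values $\{a\}=\F_4$. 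That is false, because $a^2/a=a$ takes the two distinct values $\omega,\omega^2$ on $\F_4\setminus\F_2$. Therefore $\bar V\not\cong \bar V^\sigma$, and so $\bar V$ is not defined over $\F_2$.

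The main care point is the second step: justifying that $\mathrm{soc}^2$ is the given $2$-dimensional subspace, and that the line of the homomorphism $\phi$ is independent of the choice of basis. Both are short direct checks with the explicit matrices $u(a,b)$.
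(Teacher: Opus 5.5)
Your proof is correct, but it takes a different route from the paper.

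\textbf{The paper's route.} The paper argues group-theoretically. The representation $P\to\GL_3(k)$ afforded by $\bar V$ is faithful, since it is the inclusion $P\le\GL_3(\F_4)$. An $\F_2$-form would therefore embed $Q_8$ into $\GL_3(\F_2)$. Since $|\GL_3(\F_2)|=168$, the image would be a Sylow $2$-subgroup, and these are dihedral of order $8$. That is impossible.

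\textbf{Your route.} You compare $\bar V$ with its Frobenius twist $\bar V^{\sigma}$. You separate the two modules by an isomorphism invariant: the line in $\mathrm{Hom}(P,k)$ determined by $\mathrm{soc}^2$. This line is spanned by $u(a,b)\mapsto a$ in one case and by $u(a,b)\mapsto a^2$ in the other, and these are not proportional. Your checks of $\mathrm{soc}(\bar V)$, $\mathrm{soc}^2(\bar V)$, the twisted matrices $u(a^2,b^2)$, and the rescaling of $\phi_E$ under change of basis are all right. You also implicitly use that every $a\in\F_4$ occurs in $P$, which holds by \ref{void:SU_3(2)}(iv).

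\textbf{What each approach buys.} The paper's argument is shorter and uses only faithfulness, so it shows that no faithful $3$-dimensional $kQ_8$-module at all is defined over $\F_2$. Yours needs the explicit matrices, but it proves the strictly stronger fact $\bar V\not\cong\bar V^{\sigma}$. So $\bar V$ is not even stable under $\mathrm{Gal}(\F_4/\F_2)$, and its Frobenius twist is a genuinely different module of the same kind.
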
 
	
\begin{proof}	
	  The representation
$P\to\GL_3(k)$ afforded by $\bar V$ is faithful, since it is the
given inclusion
$$P=\langle R_0,S_0\rangle\leq\GL_3(\F_4)\leq\GL_3(k).$$
If $\bar V\cong k\otimes_{\F_2}V_0$ for some $3$-dimensional $\F_2P$-module $V_0$, then scalar extension preserves the kernel, so $V_0$ would give an
embedding $Q_8\cong P\hookrightarrow\GL_3(\F_2)$.  This is impossible, because $|\GL_3(\F_2)|=168$, and its Sylow $2$-subgroups are conjugate to the upper
unitriangular subgroup, which is dihedral of order $8$, not quaternion. 
\end{proof}

\begin{remark}
{\rm At this stage, since we have proved Proposition \ref{prop:source of simple kG-module}, we could finish the proof of Theorem \ref{theo:main} (i) by applying Puig's structure theorem of source algebras of nilpotent blocks (\cite[Theorem 8.11.5]{Lin18b}) and a lifting theorem by Kessar and Linckelmann (\cite[Theorem 1.13]{Kessar_Linckelmann}). However, the groups and blocks considered here are very specific, we aim to give a proof as elementary as possible. Therefore, we choose not to invoke these deep results.
}
\end{remark}

\begin{proposition}\label{prop:V is a source of S}
The simple $kG$-module $\bar S$ lifts to an $\O$-free $\O G$-module $S$ with an endotrivial $\O P$-source $V:=\Res_P^G(S)$.	
\end{proposition}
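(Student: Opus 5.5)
The plan is to lift $\bar S$ concretely, by lifting the matrix representation $G\leq \GL_3(\F_4)$ to $\O$, and then to obtain the endotrivial property of $V$ from the mod-$2$ computation in Proposition \ref{prop:restriction of simple kG-module}.

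\emph{Constructing the lift.} Since $K$ is a splitting field for all groups considered, $\O$ contains a primitive cube root of unity $\hat\omega$ whose image in $k$ is $\omega$. Because $G=NP$ with $N\unlhd G$ and $|N|=27$ invertible in $k$, there are two natural routes.

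\textbf{Route (a).} Restricted to $N$, $\bar S$ is a simple $kN$-module by Proposition \ref{prop:simple kG-module}. As $|N|$ is a unit in $\O$, it lifts uniquely up to isomorphism to a simple $\O N$-module $T$ of rank $3$. Concretely, $T$ is given by the matrices $\hat X_0=C\,\mathrm{diag}(1,\hat\omega,\hat\omega^2)\,C^{-1}$ and $\hat Y_0=CYC^{-1}$, where we first check that $C$ is invertible over $\O$. The $K$-character of $T$ is $G$-stable, since the reduction of $T$ is $G$-stable and ordinary characters of a $p'$-group are determined by their Brauer characters.

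\textbf{Route (b).} To avoid Clifford theory entirely, one can use the Schur–Zassenhaus / coprimality trick. The obstruction to extending $T$ from $N$ to $G$ lies in $H^2(G/N,\O^\times)$, which is a $2$-group, so $p'$-methods do not kill it directly. A cleaner argument is the general fact that lifting of modules is unobstructed when the relevant $\mathrm{Ext}^2_{kG}(\bar S,\bar S)$ vanishes. Here $\mathrm{Ext}^n_{kG}(\bar S,\bar S)\cong \mathrm{Ext}^n_{kP}(\bar V,\bar V)^{\,G/N\text{-fixed}}$, which is awkward to control.

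I therefore commit to route (a) and make the extension explicit. I will lift $R_0$ and $S_0$ to matrices $\hat R_0,\hat S_0$ over $\O$ normalizing the group $\langle\hat X_0,\hat Y_0\rangle$ and inducing the same automorphisms as $R_0,S_0$ on $N$. Existence and uniqueness up to scalars follow from Schur's lemma for the absolutely simple $KN$-module $K\otimes T$, together with the $G$-stability above. The scalars can then be normalized using determinant $1$ and the requirement that the reductions equal $R_0,S_0$. The group generated is then a central extension of $G$ by scalars in $\O^\times$ reducing to $1$. Because $G$ is a finite group whose relations have finite order, these scalars are roots of unity in $1+J(\O)$, hence $2$-power roots of unity. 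The determinant-$1$ condition and the relation $R^4=1$ then force them to be trivial, possibly after adjusting by signs. This gives an $\O$-free $\O G$-module $S$ with $k\otimes_\O S\cong\bar S$.

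\emph{Endotrivial source.} Put $V=\Res^G_P(S)$, which is $\O$-free of rank $3$ with $k\otimes_\O V\cong\bar V$. By (\ref{eq:end-w}), $k\otimes_\O\End_\O(V)\cong\End_k(\bar V)\cong k\oplus kP$. Since $\dim 3$ is a unit in $\O$, the trace splitting gives $\End_\O(V)=\O\cdot\Id_V\oplus U$ with $k\otimes_\O U\cong kP$. An $\O$-free lattice whose reduction is projective is itself projective, so $U\cong\O P$ and $V$ is endotrivial. The source property lifts verbatim from Proposition \ref{prop:source of simple kG-module}: Higman's criterion applies with $\Tr_P^G(\Id_V)=27\,\Id_S$ and $27\in\O^\times$, and $V$ is indecomposable because its reduction is. So $V$ is a source of $S$.

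The main obstacle is the extension step from $N$ to $G$, namely killing the $2$-power scalar ambiguity. If the normalization argument fails, the fallback is to write down explicit $\O$-matrices for $R_0,S_0$ built from $\hat\omega$, and to verify directly the relations of $Q_8$ and the conjugation action on $\hat X_0,\hat Y_0$.
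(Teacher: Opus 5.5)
Your proof is correct, and its second half is the same as the paper's: the trace splitting of $\End_\O(V)$, lifting $\bar U\cong kP$ to $U\cong\O P$, and Higman's criterion with $27\in\O^\times$. The two proofs differ only in how the lift $S$ is obtained.

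\begin{itemize}
\item The paper simply quotes the Fong--Swan theorem for the $2$-solvable group $G$.
\item You construct $S$ by hand. You lift $\Res^G_N(\bar S)$ to the unique $\O N$-lattice $T$. You then realize each generator of $P$ by an intertwiner $T\cong {}^gT$ from Schur's lemma; rescaling it so that its reduction is exactly $R_0$ or $S_0$ works because $\bar T$ is absolutely simple. Finally you check that reduction is injective on the group $\hat G$ so generated.
\end{itemize}

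Fong--Swan is a one-line citation valid for every $p$-solvable group, but it does not single out a lift; there are four, differing by the linear characters of $G/N\cong Q_8$. Your route uses only Schur's and Hensel's lemmas, in the spirit of the paper's stated preference for elementary arguments. It also produces the determinant-one lift, and for that lift the linear character $\delta$ in Lemma~\ref{lem:q8-source-character} is trivial.

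One step needs a different justification. Finiteness of $G$ does not by itself make the kernel scalars roots of unity. Before normalizing, $\hat R_0^4$ can be any scalar in $1+J(\O)$, torsion or not. What actually works is the determinant:
\begin{itemize}
\item Since $3\in\O^\times$, Hensel's lemma gives cube roots in $1+J(\O)$. So you may take $\det\hat R_0=\det\hat S_0=1$ without changing the reductions.
\item $\det\hat\rho(n)$ is a cube root of unity congruent to $1$, hence equal to $1$. Therefore $\hat G\le\SL_3(\O)$.
\item A kernel element normalizes $\hat\rho(N)$ and induces the identity on it modulo $J(\O)$. Reduction is injective on $\hat\rho(N)$, so the element centralizes the absolutely irreducible $\hat\rho(N)$. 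It is therefore $cI$ with $c^3=1$ and $c\equiv 1$ modulo $J(\O)$.
\item This forces $c=1$, because the cube roots of unity stay distinct modulo $J(\O)$.
\end{itemize}
No sign adjustment is needed, and it would in fact destroy $\det=1$. The relation $R^4=1$ plays no role. This determinant argument is exactly the coprimality of $3$ with $|G/N|=8$ that kills the obstruction class in your Route (b), so that route was less awkward than you feared.
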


\begin{proof}
The group $G=N\rtimes P$ is $2$-solvable.  By the Fong--Swan lifting
theorem \cite{Fong1961,Swan1960} (see e.g. \cite[Theorem 10.1]{Navarro}), the simple module $\bar S$ has a lift to an $\O$-free
$\O G$-module $S$ of rank $3$; thus $k\otimes_{\O}S\cong \bar S.$
Let $\rho:G\to\GL(S)$ be the corresponding representation and let
$V=\Res_P^G(S).$
We now lift the isomorphism \eqref{eq:end-w}.  Since $3\in\O^\times$, as in the proof of Proposition \ref{prop:restriction of simple kG-module}, we have
$$\End_{\O}(V)=\O\cdot\Id_V\oplus U,$$
as $\O P$-modules, where $U$ is the kernel of the trace map $\Tr_V:\End_{\O}(V)\to \O$.
The reduction modulo the maximal ideal of $\O$ gives $k\otimes_{\O}U\cong \bar U\cong kP$.  If $u_0\in U$
lifts a $kP$-generator of $\bar U$, the $\O P$-homomorphism
$$\O P\to U$$
sending any $a\in \O P$ to $au_0\in U$ is an $\O P$-isomorphism after reduction.  The Nakayama lemma and the equality of
$\O$-ranks show that it is already an isomorphism.  Consequently,
\begin{equation}\label{eq:end-v}
	\End_{\O}(V)\cong\O\oplus\O P
\end{equation}
as $\O P$-modules.  Thus $V$ is an endotrivial $\O P$-module with $\O$-rank $3$.
Since $k\otimes_\O V\cong \bar V$, $V$ is indecomposable with vertex $P$.  The same argument used in the proof of Proposition \ref{prop:source of simple kG-module} also shows that $V$ is an $\O P$-source of $S$.
\end{proof}

\begin{proposition}\label{prop:Morita equivalence}
Let $e$ be the block of $\O N$ corresponding to the simple $kN$-module $\Res_N^G(\bar S)$. Then $e$ remains a block of $\O G$. Moreover, the block algebra $B=\O Ge$ is Morita equivalent to $\O P$ via a $B$-$\O P$-bimodule $M$.
\end{proposition}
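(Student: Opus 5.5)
Since $N$ is a normal $2'$-subgroup, $\O N$ is semisimple over $\O$ in the sense that every block of $\O N$ is a matrix algebra: $e$ is the central idempotent with $\O Ne\cong \End_\O(\Res^G_N S)\cong M_3(\O)$, where $\Res_N^G(S)$ is the lift of the simple $kN$-module $\Res_N^G(\bar S)$ (simple by Proposition \ref{prop:simple kG-module}). I will first show that $e$ is $G$-stable. The character of $\Res^G_N(S)$ is the restriction of a $G$-character, so it is $G$-invariant. Hence its central idempotent $e$ is fixed under $G$-conjugation, and $e\in Z(\O G)$.

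Next I show that $e$ is primitive in $Z(\O G)$. Using $\O Ne\cong \End_\O(S)$, where $S$ is the $\O G$-module of Proposition \ref{prop:V is a source of S}, the standard Clifford-theoretic decomposition gives an algebra isomorphism
$$\O Ge\cong \End_\O(S)\otimes_\O \O P .$$
Concretely, $\rho:G\to\GL(S)$ extends the action of $N$. For $g\in P$ I set $\tilde g:=\rho(g)^{-1}$ viewed in $\O Ne=\End_\O(S)$, and define $\O P\to \O Ge$ by $g\mapsto \tilde g\, g e$. The elements $\tilde g g e$ centralize $\O Ne$ and multiply like $P$, because $\rho$ is an honest homomorphism, so there is no cocycle. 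The multiplication map $\End_\O(S)\otimes_\O \O P\to \O Ge$ is surjective. Both sides have $\O$-rank $9\cdot 8=72=\mathrm{rank}_\O(\O Ge)$, since $\O Ge$ is free over $\O Ne$ of rank $[G:N]=8$. Hence the map is an isomorphism. Since $Z(\O Ge)\cong Z(\O P)$ is local, $e$ is a block of $\O G$. Equivalently, $S$ is the unique simple module over $k$ of the block, which is consistent with a nilpotent block of defect $Q_8$.

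For the Morita equivalence I take $M:=S\otimes_\O \O P$, a $B$-$\O P$-bimodule. Here $N$ acts through $S$ alone, $P$ acts diagonally on the left, and $\O P$ acts by right multiplication. Under the isomorphism above, $M$ corresponds to $S\otimes_\O \O P$ as a $\End_\O(S)\otimes\O P$-$\O P$-bimodule. Since $S$ is a progenerator giving the Morita equivalence between $\End_\O(S)$ and $\O$, $M$ induces a Morita equivalence between $B$ and $\O P$. I will verify that the left $G$-action on $M$ is $\Ind$-like: viewing $M$ as an $\O(G\times P)$-module, one has $M\cong \Ind_{\Delta P\cdot(N\times 1)}^{G\times P}(S)$, with $S$ restricted appropriately. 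This identifies the source as $V=\Res_P^G(S)$ on $\Delta P$, which is what Theorem \ref{theo:main}(i) needs; this source claim is used later and is not part of the present statement.

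The main obstacle is the bookkeeping in the isomorphism $\O Ge\cong\End_\O(S)\otimes_\O\O P$: checking that the elements $\tilde g ge$ commute with $\O Ne$ and form a group copy of $P$. Centralizing holds because $g n g^{-1}$ acts on $S$ by $\rho(g)\rho(n)\rho(g)^{-1}$. The group property holds because $\rho$ restricted to $P$ is a homomorphism and conjugation by $g$ fixes $e$. The rank count then finishes the argument.
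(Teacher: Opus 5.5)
Your proof is correct and follows the paper's route. $G$-invariance of the character of $\Res^G_N(S)$ makes $e$ central, the identification $\O Ne\cong\End_\O(S)$ gives $B\cong\End_\O(S)\otimes_\O\O P\cong M_3(\O P)$ with $Z(\O P)$ local, and $M=S\otimes_\O\O P$ is the standard Morita bimodule. The differences are cosmetic: your map $\phi\otimes u\mapsto\phi\,\tilde u\,ue$ is the inverse of the paper's $nue\mapsto\rho(nu)\otimes u$, you get bijectivity from surjectivity plus the rank count $72=72$, and you obtain $\O Ne\cong M_3(\O)$ from the structure of $p'$-group algebras rather than from Nakayama's lemma.
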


\begin{proof}
The character of $\Res_N^G(S)$ is $G$-invariant because it is the restriction of the character
of $S$.  Hence $e$ is fixed by $G$ and is central in $\O G$.
Consider the structural homomorphism 
\begin{equation}\label{eq:a-end-v}
	\O Ne\to\End_{\O}(S)
\end{equation}
of $\Res_N^G(S)$. Reduction modulo the maximal ideal of $\O$ gives the structural homomorphism of $\Res_N^G(\bar S)$, which is a $k$-algebra isomorphism
$$kN\bar{e}\xrightarrow{\ \cong\ }\End_k(\bar S).$$
It follows from Nakayama's lemma that (\ref{eq:a-end-v}) is an $\O$-algebra isomorphism as well.

Set $B=\O Ge$.  Since $G=N\rtimes P$, we have
$B=\bigoplus_{u\in P}(\O Ne)u$.  The $\O$-linear map
\begin{equation}\label{eq:block-algebra-isomorphism}
\begin{split}
	B&\to\End_{\O}(S)\otimes_{\O}\O P,\\
	nue&\mapsto\rho(nu)\otimes u
	\qquad(n\in N,\ u\in P)
\end{split}
\end{equation}
is an $\O$-algebra isomorphism.  Indeed, the multiplicativity follows from
$$(n_1u_1)(n_2u_2)=n_1({}^{u_1}n_2)u_1u_2\qquad(n_1,n_2\in N,\ u_1,u_2\in P)$$
and from the fact that $\rho$ is a representation, and the bijectivity
follows from the isomorphism \eqref{eq:a-end-v}.  In
particular,
$$B\cong\End_{\O}(S)\otimes_{\O}\O P\cong M_3(\O P).$$
The algebra $\O P$ is local, so the identity element of $M_3(\O P)$ is the unique nonzero central idempotent.  Hence $e$ is a block of $\O G$,
and $B$ is a block algebra.

Via the isomorphism (\ref{eq:block-algebra-isomorphism}), let
\begin{equation}\label{eq:Morita bimodule}
M=S\otimes_{\O}\O P
\end{equation}
be the standard $(B,\O P)$-bimodule, with
$$(nu)(s\otimes a)=\rho(nu)s\otimes ua~~~{\rm and}~~~(s\otimes a)u=s\otimes au,$$
for any $n\in N$, $u\in P$, $s\in S$ and $a\in \O P$.
It is the usual column bimodule for $M_3(\O P)$ and therefore induces a
Morita equivalence between $B$ and $\O P$.
\end{proof}

\begin{proposition}\label{prop: M has V as source}
The $B$-$\O P$-bimodule $M$ in (\ref{eq:Morita bimodule}) has the endotrivial $\O \Delta P$-module $V=\Res_P^G(S)$ as a source. (Recall that we view any $\O P$-module as an $\O \Delta P$-module via the canonical isomorphism $\Delta P\cong P$.)
\end{proposition}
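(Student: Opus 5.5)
We want to show that $M=S\otimes_\O\O P$, viewed as an $\O(G\times P)$-module, is indecomposable with vertex $\Delta P$ and source $V$. The plan is to identify $M$ with an induced module and then show that it is indecomposable.

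\emph{Step 1: identify $M$.} We show that $M\cong\Ind_{\Delta P}^{G\times P}(V)$. The action of $(g,u)\in G\times P$ on $M$ is $(g,u)(s\otimes a)=\rho(g)s\otimes \bar g a u^{-1}$, where $\bar g$ is the image of $g$ under $G\to G/N\cong P$. Consider the $\O$-submodule $S\otimes 1\subseteq M$. For $u\in P$,
$$(u,u)(s\otimes 1)=\rho(u)s\otimes uu^{-1}=\rho(u)s\otimes 1,$$
so $S\otimes 1$ is $\Delta P$-stable and isomorphic to $V=\Res^G_P(S)$ as an $\O\Delta P$-module. Frobenius reciprocity then gives a homomorphism $\Ind_{\Delta P}^{G\times P}(V)\to M$. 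It is surjective, since $(1,u)(S\otimes1)=S\otimes u^{-1}$ and these translates span $M$. Both modules have $\O$-rank $3\cdot 216=648=3\cdot[G\times P:\Delta P]$, so the map is an isomorphism. Alternatively one can build the isomorphism directly using the coset representatives $G\times 1$ of $\Delta P$ in $G\times P$.

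\emph{Step 2: indecomposability.} $M$ induces a Morita equivalence by Proposition \ref{prop:Morita equivalence}, so
$$\End_{B\otimes\O P^{op}}(M)\cong Z(\O P),$$
which is local. Hence $M$ is indecomposable. (Equivalently, one can pass through $B\cong M_3(\O P)$.)

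\emph{Step 3: vertex and source.} Since $M\cong\Ind_{\Delta P}^{G\times P}(V)$ and $M$ is indecomposable, $M$ is relatively $\Delta P$-projective. Some vertex $Q$ of $M$ therefore lies in $\Delta P$, and $M$ has a source that is a summand of $\Res^{\Delta P}_Q(V)$. To see that $Q=\Delta P$, restrict to $1\times P$. Indecomposable summands of $\Res_{1\times P}(M)$ have vertices contained in ${}^{x}Q\cap(1\times P)$, and for subgroups of $\Delta P$ such an intersection is trivial. But $\Res_{1\times P}M\cong \O P^{\oplus \dim S\cdot|N|}$ is free, so this gives no information. Instead we use the fact that $V$ itself is indecomposable with vertex $P$ (Proposition \ref{prop:V is a source of S}). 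Write $V$ as a summand of $\Res^{G\times P}_{\Delta P}(M)$ via the embedding $S\otimes1$. By Mackey, the summand indexed by the trivial double coset is $V$, and a summand of $\Res_{\Delta P}\Ind_{\Delta P}V$ with vertex $\Delta P$ must come from a source of $M$ whose vertex contains a conjugate of $\Delta P$. Thus $|Q|\ge|P|$, so $Q=\Delta P$. The source is then an indecomposable summand $W$ of $V$ with $\Ind(W)\cong M$; since $V$ is indecomposable, $W=V$.

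The main obstacle is this last vertex argument. The cleanest route I would try first is Green's indecomposability-style reasoning: if $Q<\Delta P$ then $M\mid\Ind_Q^{G\times P}(W)$, hence $V\mid\Res_{\Delta P}M$ would be relatively $Q'$-projective for some $Q'$ conjugate into $Q$ or a smaller intersection. This contradicts the fact that $V$ has vertex $P$. The Mackey summands $\Res_{\Delta P}\Ind_{{}^x\Delta P\cap\Delta P}$ have vertices inside ${}^x\Delta P\cap \Delta P$, which is proper unless $x\in C$-type cosets, so the bookkeeping there needs care.
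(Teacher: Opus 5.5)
There is a genuine gap in Step 1, and Step 3 inherits it. The bimodule $M=S\otimes_\O\O P$ has $\O$-rank $3\cdot 8=24$, not $648$. It is $\Ind_{\Delta P}^{G\times P}(V)$ that has rank $3\cdot[G\times P:\Delta P]=648=27\cdot 24$; likewise $\Res_{1\times P}M\cong\O P^{\oplus 3}$, not $\O P^{\oplus 81}$.

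So $M\not\cong\Ind_{\Delta P}^{G\times P}(V)$. Your Frobenius reciprocity map $\Ind_{\Delta P}^{G\times P}(V)\to M$ is indeed surjective, but it has a large kernel. The $216$ translates $(nu,1)(S\otimes 1)=S\otimes u$ coincide in groups of $|N|=27$. A surjection from an induced module says nothing about relative projectivity unless it splits: the trivial $kG$-module is a quotient of $kG=\Ind_1^G(k)$ without being projective.

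Hence two claims are unsupported: the assertion in Step 3 that $M$ is relatively $\Delta P$-projective, and the final ``$\Ind(W)\cong M$''. Relative $\Delta P$-projectivity is the real content of the statement, not the vertex bookkeeping you flagged. It cannot come for free: $[G\times P:\Delta P]=216$ is even, so one must use that $|N|=27$ is invertible in $\O$.

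The paper does this via Higman's criterion. The projection $\Pi$ of $M$ onto $S\otimes1$ along $\bigoplus_{u\neq1}S\otimes u$ is $\Delta P$-linear. Its conjugates $(nu,1)\Pi(nu,1)^{-1}$ are the projections onto $S\otimes u$. So $\Tr_{\Delta P}^{G\times P}(\Pi)=27\cdot\Id_M$, and therefore $M\mid\Ind_{\Delta P}^{G\times P}(V)$.

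Your induced-module idea can also be repaired by inducing from the right subgroup. Let $\pi:G\to P$ be the projection. Then $S\otimes1$ is stable under the graph $D=\{(g,\pi(g))\mid g\in G\}\cong G$, which has index $8$ with coset representatives $(1,v)$, $v\in P$. So $M\cong\Ind_D^{G\times P}(S)$, and now the ranks match, $8\cdot3=24$. By Proposition~\ref{prop:V is a source of S}, $S\mid\Ind_P^G(V)$, and the isomorphism $D\cong G$ carries $P$ to $\Delta P$. Transitivity of induction then gives $M\mid\Ind_{\Delta P}^{G\times P}(V)$.

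With this in place, two of your pieces finish the proof:
\begin{itemize}
\item Your Step 2, indecomposability from $\End_{\O(G\times P)}(M)\cong Z(\O P)$ being local, which the paper merely asserts.
\item The cleaner version of your vertex argument: $V$ is a $\Delta P$-summand of $\Res_{\Delta P}^{G\times P}(M)$ via $S\otimes1$, so a vertex $Q\leq\Delta P$ of $M$ must satisfy $|Q|\geq|P|$.
\end{itemize}
Then $V$ is a source because $M\mid\Ind_{\Delta P}^{G\times P}(V)$, not because $M\cong\Ind_{\Delta P}^{G\times P}(V)$.
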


\begin{proof}
 Let
$\pi:G=N\rtimes P\to P$ be the canonical projection. For any $g\in G$, $u\in P$ and $m\in M$,  with
the convention $(g,u)m=gmu^{-1}$, the $\O(G\times P)$-action on $M$ is defined by 
\begin{equation*}
	(g,u)(s\otimes a)
	=\rho(g)s\otimes\pi(g)au^{-1},
\end{equation*}
for any $s\in S$ and $a\in \O P$. The projection $\Pi:M\to M$ onto $S\otimes 1$ along the
basis elements $S\otimes u$, $u\neq1$, is $\O \Delta P$-linear: $\Delta P$ acts on the
second tensor factor by conjugation.  A set of representatives for
$(G\times P)/\Delta P$ is $\{(g,1)\mid g\in G\}$.  If $g=nu$, with $n\in N$ and
$u\in P$, then $(g,1)\Pi(g,1)^{-1}$ is the projection of $M$ onto
$S\otimes u$.  Therefore
\begin{equation*}
	\Tr_{\Delta P}^{G\times P}(\Pi)
	=\sum_{g\in G}(g,1)\Pi(g,1)^{-1}
	=|N|\cdot\Id_M=27\cdot\Id_M.
\end{equation*}
Define $\eta:\Res_{\Delta P}^{G\times P}(M)\to \Res_{P}^G(S)=V$ with $\eta\left(\sum_{u\in P}s_u\otimes u\right)=s_1$,  and $\iota:V\to\Res_{\Delta P}^{G\times P}(M)$ with $\iota(s)=s\otimes1$. One checks directly that $\eta$ and $\iota$ are homomorphisms of $\O \Delta P$-modules and $\Pi=\iota\circ \eta$. Define
$\varphi:M\to \Ind_{\Delta P}^{G\times P}(V)$ with $\varphi(m)= \sum_{g\in G}(g,1)\otimes \eta\bigl((g,1)^{-1}m\bigr)$ for any $m\in M$, and define $\psi:\Ind_{\Delta P}^{G\times P}(V)\to M$ with $\psi((g\otimes u)\otimes s)=(g\otimes u)\iota(s)$ for any $g\in G$, $u\in P$ and $s\in V$. Since $\iota$ is an $\O \Delta P$-homomorphism, $\psi$ is well defined. One checks that $\varphi$ and $\psi$ are homomorphisms of $\O(G\times P)$-modules and $\psi\circ\varphi=\Tr_{\Delta P}^{G\times P}(\Pi)=27\cdot\Id_M$. Since $27$ is invertible in $\O$, we have $\frac{1}{27}\psi\circ\varphi=\Id_M$. This implies that $\varphi$ is injective and
$$\Ind_{\Delta P}^{G\times P}(V)= \varphi(M)\oplus\ker(\psi)$$
as $\O(G\times P)$-modules. In particular, $M$ is isomorphic to a direct summand of $\Ind_{\Delta P}^{G\times P}(V)$.

 The bimodule $M$ is indecomposable, and $V$, viewed as an $\O \Delta P$-module, has vertex $\Delta P$.
Moreover, $V$ is a summand of $\Res_{\Delta P}^{G\times P}(M)$ through
$S\otimes1$.  It follows that $\Delta P$ is a vertex of $M$ and that $V$ is an
$\O \Delta P$-source of $M$.  
\end{proof}

Thus the Morita equivalence $M$ in Proposition \ref{prop:Morita equivalence} has the required endotrivial source $V$, whose reduction $\bar V$ is not defined over $\F_2$. Now we have completed the proof of Theorem \ref{theo:main} (i).

\section{Proof of Theorem \ref{theo:main} (ii)}\label{sec:proof of ii}

\begin{notation}
{\rm 

(i) Let $G$ and $H$ be finite groups, $b$ a block of $\O G$ and $c$ a block of $\O H$. Denote by $\Z\Irr_K(G,b)$ the group of generalized characters of $G$ over $K$ associated with the block $b$, and denote by $\Z\IBr_K(G,b)$ the corresponding group of generalized Brauer characters. Following Brou\'{e}, a {\it perfect isometry} between $b$ and $c$ is a group isomorphism 
$\Phi:\Z\Irr_K(H,c)\cong \Z\Irr_K(G,b)$
satisfying certain conditions (see \cite[D\'{e}finition 1.4]{Broue1990} or \cite[Definition 9.2.2]{Lin18b}). By arithmetic properties of a perfect isometry, $\Phi$ induces an isomorphism
$\bar{\Phi}:\Z\IBr_K(H,c)\cong\Z\IBr_K(G,b)$
such that 
\begin{equation}\label{eq:ordinary-decomposition-compatibility}
	d_G\circ\Phi=\bar{\Phi}\circ d_H;
\end{equation}
see \cite[Corollary 9.2.7]{Lin18b}. Here $d_G$ and $d_H$ are the usual decomposition maps. If
$\Phi:\Z\Irr_K(H,c)\cong \Z\Irr_K(G,b)$
is a perfect isometry, then it is an isometry for the usual scalar
products of ordinary characters.  Its associated virtual character is
\begin{equation}\label{equ:associated virtual character}
\mu_\Phi(g,h)
	=\sum_{\chi\in\Irr_K(H,c)}\Phi(\chi)(g)\chi(h)
	\end{equation}
	for all $(g,h)\in G\times H$.	
The defining conditions for a perfect isometry require the vanishing property
\begin{equation}\label{eq:perfect-character-vanishing}
	\mu_\Phi(g,h)=0
\end{equation}
whenever exactly one of $g$ and $h$ has order prime to $p$ but the other has order divisible by $p$. 

(ii) An $(\O Gb,\O Hc)$-bimodule $M$ induces, via the tensor functor $(K\otimes_\O M)\otimes_{KH}-$, a $\Z$-linear map $\Phi_M:\Z\Irr_K(H,c)\to \Z\Irr_K(G,b)$. If $M$ induces a Morita equivalence, then $\Phi_M$ is a perfect isometry.

(iii) Given a $p$-element $u$ of $G$ and a block $e$ of $kC_G(u)$, denote by $\hat{e}$ the unique block of $\O C_G(u)$ that lifts $e$. For any $\chi\in {\rm Cl}_K(G)$ (where ${\rm Cl}_K(G)$ denotes the set of $K$-valued class functions on $G$) associated with the block $b$, define a class function $d_{(G,b)}^{(u,e)}(\chi)$ in ${\rm Cl}_K(C_G(u)_{p'})$ by setting $d_{(G,b)}^{(u,e)}(\chi)(s)=\chi(\hat{e}us)$ for all $p'$-elements $s$ in $C_G(u)$. 
}
\end{notation}

\begin{proposition}\label{source algebra of B}
Keep the notation of Proposition \ref{prop:Morita equivalence}. The identity element $e$ fo $B$ remains primitive in $B^P:=\{a\in B\mid uau^{-1}=a,~\forall u\in P\}$. Equivalent, $B$ is a source algebra of $B$.	
\end{proposition}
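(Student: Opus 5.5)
We use the algebra isomorphism \eqref{eq:block-algebra-isomorphism}, $B\cong \End_\O(S)\otimes_\O \O P$, and transport the conjugation action of $P$ across it. For $u\in P$, conjugation by $ue$ on $B$ corresponds to conjugation by $\rho(u)\otimes u$ on $\End_\O(S)\otimes_\O\O P$. This is the tensor product of conjugation by $\rho(u)$ on $\End_\O(S)$ and conjugation by $u$ on $\O P$. Thus
$$B^P\cong\bigl(\End_\O(V)\otimes_\O \O P\bigr)^P,$$
where $P$ acts diagonally. Here $\End_\O(V)$ is $\End_\O(S)$ with the $P$-action through $\rho|_P$, and $\O P$ carries the conjugation action, so that $\O P\cong \Ind_{\Delta P}^{P\times P}(\O)$ restricted to $\Delta P$, a permutation module.

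Since $e$ is the identity of $B$, proving primitivity amounts to proving that $B^P$ is a local $\O$-algebra. The strategy is to show that $B^P$ modulo its Jacobson radical is $k$, by passing to $k$ and to Brauer quotients.

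\begin{itemize}
\item[Step 1.] Decompose $\O P=\bigoplus_{C}\O C$ as $\O\Delta P$-modules, where $C$ runs over the conjugacy classes of $P$. This gives
$$B^P\cong\bigoplus_C\bigl(\End_\O(V)\otimes \O C\bigr)^P .$$
We have $\O C\cong\Ind_{C_P(x)}^P\O$ for $x\in C$. Frobenius reciprocity then gives
$$\bigl(\End_\O(V)\otimes\O C\bigr)^P\cong \End_\O(V)^{C_P(x)}=\End_{\O C_P(x)}(V).$$
\item[Step 2.] Show that only the summand for $C=\{1\}$ can contribute to $B^P/J(B^P)$. For this we use the Brauer homomorphism $\mathrm{Br}_P:B^P\to \bar B(P)$. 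The Brauer quotient $(\End_k(\bar V)\otimes kP)(P)$ is $\End_k(\bar V)(P)\otimes kZ(P)$. By \eqref{eq:end-w}, $\End_k(\bar V)(P)\cong k$, since the free summand $kP$ has zero Brauer quotient. Hence $\bar B(P)\cong kZ(P)$, which is a local algebra.
\item[Step 3.] Conclude with the standard lemma: if an idempotent $i\in B^P$ satisfies $\mathrm{Br}_P(i)\neq 0$ and $i$ is primitive in $\bar B(P)$, then $i$ is primitive in $B^P$ modulo idempotents killed by $\mathrm{Br}_P$. Any idempotent killed by $\mathrm{Br}_P$ lies in the image of $\sum_{Q<P}\Tr_Q^P$.
\end{itemize}

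Thus we must also exclude an idempotent decomposition $e=i+j$ with $\mathrm{Br}_P(j)=0$ and $j\neq 0$. A nonzero such $j$ would make $jBj$ have a projective summand relative to proper subgroups, that is, a summand of $B$ as $\O\Delta P$-module with vertex smaller than $P$, acting as a direct summand of the identity.

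There is a simpler route via the Morita equivalence. The idempotent $e$ is primitive in $B^P$ if and only if $\Res_{P\times P}(B)$, viewed as a $B$-$\O P$-bimodule situation, is such that $B$ is indecomposable as a $B$-$\O P$-bimodule. Equivalently, $B^P\cong\End_{B\otimes\O P^{op}}(B)$. Now $B\cong M^{\oplus 3}$ as a left $B$-module, which is bad. Instead we use $\End_{B\otimes \O P^{op}}(B)^{op}$ directly.

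The computation from Step 1 gives $B^P\cong\bigoplus_C \End_{\O C_P(x)}(V)$, with multiplication compatible with $\O P$. Reducing modulo $J(\O)$ and modulo the ideal $\bigoplus_{C\neq 1}$, we obtain $\End_{kP}(\bar V)$, which is local because $\bar V$ is indecomposable (Proposition~\ref{prop:restriction of simple kG-module}). The main obstacle is showing that the summands for $C\neq\{1\}$ lie in the Jacobson radical. Our plan is to show that the $C=\{1\}$ summand is a subalgebra and that the projection onto it is an algebra homomorphism with nilpotent-modulo-$J(\O)$ kernel. This uses the fact that $Z(\O P)^+$, the span of the nontrivial class sums, lies in $J(Z(\O P))$ because $P$ is a $p$-group. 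Locality of $B^P$ then gives primitivity of $e$, hence $B$ is its own source algebra.
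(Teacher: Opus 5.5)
\textbf{There is a genuine gap.} Steps 1 and 2 are correct as far as they go: the $\O$-module decomposition of $B^P$ is right, and $\bar B(P)\cong kZ(P)$ is local. But they only show that, in a primitive decomposition of $e$ in $B^P$, exactly one idempotent survives $\mathrm{Br}_P$. As you note yourself, idempotents $j$ with $\mathrm{Br}_P(j)=0$ still have to be excluded. Neither Step 3 nor the abandoned Morita paragraph does this.

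The final plan, which is meant to close this gap, rests on a false claim. Let $t$ be the central involution of $P$. Then $\Id_V\otimes t$ is $P$-fixed and lies in the summand for $C=\{t\}$. It satisfies $(\Id_V\otimes t)^2=\Id_V\otimes 1$, so it is a unit of $B^P$, not an element of $J(B^P)$. Consequently, projection onto the $\{1\}$-component is not multiplicative: the $1$-component of $\bigl(\sum_x\phi_x\otimes x\bigr)\bigl(\sum_y\psi_y\otimes y\bigr)$ is $\sum_x\phi_x\psi_{x^{-1}}$, not $\phi_1\psi_1$. Moreover $\bigoplus_{C\neq\{1\}}$ is not closed under multiplication, so ``reducing modulo'' it is meaningless. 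The supporting fact you quote is also wrong. A class sum of a class of size one is a central element of $P$, hence a unit of $Z(\O P)$; only classes with $|C|>1$ (and differences $x-1$) give radical elements. For noncentral $x$, the cross terms $\Tr_{C_P(x)}^P(\phi_x\psi_{x^{-1}})$ do lie in $J(\End_{\O P}(V))$, because $V$ has vertex $P$. It is exactly the central classes that break your argument.

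\textbf{How to repair it.} Use the augmentation $\varepsilon:\O P\to\O$ instead of the projection onto the identity component. The map $\Id\otimes\varepsilon$, given by $\sum_x\phi_x\otimes x\mapsto\sum_x\phi_x$, is an algebra homomorphism $\End_\O(V)\otimes_\O\O P\to\End_\O(V)$. It maps $B^P$ onto $\End_{\O P}(V)$, with section $\phi\mapsto\phi\otimes 1$. Its kernel on $B^P$ is $(\End_\O(V)\otimes I(\O P))^P$, where $I(\O P)$ is the augmentation ideal. Since $I(\O P)^n\subseteq J(\O)\O P$ for some $n$, a power of this kernel lies in $J(\O)B\cap B^P=J(\O)B^P\subseteq J(B^P)$. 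Hence $B^P/J(B^P)\cong \End_{\O P}(V)/J(\End_{\O P}(V))\cong k$, because $V$ is indecomposable. This is the quotient $\End_{kP}(\bar V)$ you were aiming for, now reached by a genuinely multiplicative map.

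Equivalently, $B^P\cong\End_{\O(P\times P)}(\Ind_{\Delta P}^{P\times P}(V))$, which is local by Green's indecomposability theorem; this is what your Morita paragraph was reaching for. With either repair, your argument becomes a self-contained alternative to the paper's proof. The paper instead checks that the isomorphism $B\cong\End_\O(S)\otimes_\O\O P$ respects the interior $P$-algebra structures and cites a general result from Linckelmann's book. The repaired argument uses only the indecomposability of $V$, not endotriviality.
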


\begin{proof}
Using the explicit construction of the isomorphism (\ref{eq:block-algebra-isomorphism}), one checks that the isomorphism (\ref{eq:block-algebra-isomorphism}) is also an isomorphism of interior $P$-algebras. Since $\Res_P^G(S)$ is an indecomposable endotrivial $\O P$-module (see Proposition \ref{prop:V is a source of S}),  now the statement follows from \cite[Corollary 7.5]{Lin18b}.
\end{proof}

\begin{notation}\label{notation: fusion system in our case}
{\rm Keep the notation of Proposition \ref{prop:Morita equivalence}. Recall that $B=\O Ge$, $P\cong Q_8$, and the
$B$-$\O P$-bimodule $M$ induces a Morita equivalence between $B$ and $\O P$. Since $M$ has the $\Delta P$-source $V$ (see Proposition \ref{prop: M has V as source}), $M$ is isomorphic to a direct summand of 
$$B\otimes_{\O P}\Ind_{\Delta P}^{P\times P}(V)\otimes_{\O P} \O P$$
By the proof of \cite[Theorem 9.11.2 (ii)]{Lin18b}, the fusion system on $P$ determined by the source idempotent $e$ of $B$ is equal to the fusion system $\mathcal{F}_{P}(P)$. (Alternatively, one can use the fact that $B$ is a nilpotent block to directly obtain this.)  For every $Q\leq P$, let $e_Q$ be the unique block of
$kC_G(Q)$ determined by the source idempotent $e$ of $B$, and let $\hat e_Q$ be its lift to
$\O C_G(Q)$. If $Q\leq P$, $u\in C_P(Q)$, and $R=Q\langle u\rangle$, then we set
\begin{equation*}
	d_{G,Q}^{u}:=d_{(C_G(Q),e_Q)}^{(u,e_R)}~~~{\rm and}~~~d_{P,Q}^{u}:=d_{(C_P(Q),1)}^{(u,1)}.
\end{equation*}
Thus, $(d_{P,Q}^{u}\chi)(s)=\chi(us)$ for all $s\in C_P(R)_{2'}$. Since $C_P(R)$ is a $2$-group, $1$ is the unique $2'$-element of $C_P(R)$. In particular, if $\lambda$ is a linear character of $C_P(Q)$ and
$\varphi_R$ is the unique irreducible Brauer character of $C_P(R)$,
then
\begin{equation}\label{eq:decomposition-linear-character}
	d_{P,Q}^{u}(\lambda)=\lambda(u)\varphi_R.
\end{equation}

}
\end{notation}

The following is \cite[Definition 9.5.1]{Lin18b} specialised to the two block algebras $B$ and $\O P$:

\begin{definition}[{\cite[Definition 9.5.1]{Lin18b}}]\label{def:strong-isotypy-specialised}
\normalfont
Keep Notation \ref{notation: fusion system in our case}. A {\it global isotypy} from $\O P$ to $B$ is a family of
perfect isometries
\begin{equation*}
	\Phi_Q:\Z\Irr_K(C_P(Q),1)\cong
	\Z\Irr_K(C_G(Q),\hat e_Q)
\end{equation*}
for every subgroup $Q$ of $P$, with the following properties:
\begin{enumerate}[{\rm (i)}]
	\item (Equivariance) For any isomorphism $\varphi:Q\cong R$ in $\mathcal{F}_P(P)$, we have ${}^\varphi\Phi_Q=\Phi_R$, where ${}^\varphi\Phi_Q$ is obtained from composing $\Phi_Q$ with the isomorphisms $\Z \Irr_K(C_G(Q),\hat{e}_Q)\cong \Z \Irr_K(C_G(R),\hat{e}_R)$ and $\Z \Irr_K(C_P(Q),1)\cong \Z \Irr_K(C_P(R),1)$ given by conjugation with elements $x\in G$ and $y\in P$ satisfying $\varphi(u)=xux^{-1}=yuy^{-1}$ for all $u\in Q$.
	\item (Compatibility) For any subgroup $Q$ of $P$ and any $u\in C_P(Q)$, setting $R=Q\langle u\rangle$, one has 
	\begin{equation}\label{eq:strong-isotypy-compatibility}
		d_{G,Q}^{u}\circ\Phi_Q
		=\bar\Phi_R\circ d_{P,Q}^{u}.
	\end{equation}
	as maps from $\Z\Irr_K(C_P(Q),1)$ to $K\otimes_{\Z}\Z\IBr_K(C_G(R),\hat{e}_R)$.
\end{enumerate}
\end{definition}

\begin{remark}
{\rm  The terminology ``global isotypy"  was first used by Boltje and Xu \cite[1.9 (b)]{BX}. \cite[Defintion 9.5.1]{Lin18b} is slightly different from Boltje and Xu's global isotypy, but they have many similarities. So it seems reasonable to call \cite[Theorem 9.5.1]{Lin18b} a global isotypy as well. The almost global isotypy defined in \cite[Definition 1.3]{H26:isotypy} has the same equivariance requirement, but permits a sign $\varepsilon_{Q,u}\in\{\pm1\}$ on the right side of
\eqref{eq:strong-isotypy-compatibility}.  
}
\end{remark}

We next record an elementary consequence of the property \eqref{eq:perfect-character-vanishing} that will allow us to treat an arbitrary candidate family of perfect isometries:

\begin{lemma}\label{lem:perfect-self-isometry-two-group}
Let $G$ be a finite $p$-group and let $\Phi$ be a perfect self-isometry of
$\Z\Irr_K(G)$.  There are a sign $\varepsilon\in\{\pm1\}$ and a
degree-preserving permutation $\sigma$ of $\Irr_K(G)$ such that
$\Phi(\chi)=\varepsilon\sigma(\chi)$ for any $\chi\in\Irr_K(G)$.
Denote by $1_G$ the trivial character of $G$ and by $\varphi_G$ the unique irreducible Brauer character of $G$. Then
$\bar\Phi(\varphi_G)=\varepsilon\varphi_G$ and
$\Phi(1_G)=\varepsilon\lambda$
for some linear character $\lambda$ of $G$.
\end{lemma}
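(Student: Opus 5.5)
First I will show that a perfect self-isometry $\Phi$ of $\Z\Irr_K(G)$ sends irreducibles to signed irreducibles, using only that $\Phi$ is an isometry. The set $\Irr_K(G)$ is an orthonormal basis of $\Z\Irr_K(G)$. For $\chi\in\Irr_K(G)$ we have $\langle\Phi(\chi),\Phi(\chi)\rangle=1$, so $\Phi(\chi)=\varepsilon_\chi\sigma(\chi)$ for some sign $\varepsilon_\chi$ and some $\sigma(\chi)\in\Irr_K(G)$. Since $\Phi$ is bijective and preserves orthogonality, $\sigma$ is a permutation of $\Irr_K(G)$.

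The main step is to prove that the sign $\varepsilon_\chi$ is constant and that $\sigma$ preserves degrees. Here I will use the vanishing property \eqref{eq:perfect-character-vanishing}. Because $G$ is a $p$-group, its only $p'$-element is $1$. Taking $g=1$ and $h\neq 1$ gives
$$\mu_\Phi(1,h)=\sum_{\chi}\varepsilon_\chi\sigma(\chi)(1)\chi(h)=0 \quad\text{for all } h\neq 1.$$
So the class function $\theta:=\sum_\chi\varepsilon_\chi\sigma(\chi)(1)\chi$ is supported on $\{1\}$. It is therefore a multiple of the regular character $\rho_G=\sum_\chi\chi(1)\chi$; explicitly, $\theta=\frac{\theta(1)}{|G|}\rho_G$. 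Comparing coefficients gives $\varepsilon_\chi\sigma(\chi)(1)=c\,\chi(1)$ for a constant $c$.

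I then read off the sign and degrees. Since all degrees are positive, every $\varepsilon_\chi$ equals the sign $\varepsilon$ of $c$. Summing $\sigma(\chi)(1)^2=c^2\chi(1)^2$ over $\chi$ and using that $\sigma$ is a permutation gives $c^2=1$. Hence $\sigma(\chi)(1)=\chi(1)$, and $\Phi(\chi)=\varepsilon\sigma(\chi)$. The integrality conditions of a perfect isometry are not needed for this argument.

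Finally I will deduce the two remaining assertions.
\begin{itemize}
\item Since $\sigma$ preserves degrees, $\Phi(1_G)=\varepsilon\sigma(1_G)$ with $\lambda:=\sigma(1_G)$ linear.
\item For Brauer characters, $\Z\IBr_K(G)=\Z\varphi_G$, and for any $\chi$ the decomposition map gives $d_G(\chi)=\chi(1)\varphi_G$. By \eqref{eq:ordinary-decomposition-compatibility}, $\bar\Phi(d_G(1_G))=d_G(\Phi(1_G))=\varepsilon\lambda(1)\varphi_G=\varepsilon\varphi_G$. Since $d_G(1_G)=\varphi_G$, this yields $\bar\Phi(\varphi_G)=\varepsilon\varphi_G$.
\end{itemize}

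I expect the only real obstacle to be the constancy of the sign. That is settled by the observation that the vanishing condition forces $\theta$ to be supported at the identity, hence proportional to the regular character.
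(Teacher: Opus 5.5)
Your proof is correct and is essentially the paper's argument. The paper uses the vanishing of $\mu_\Phi(g,1)$ for $g\neq 1$ to show that $\sum_\chi\chi(1)\Phi(\chi)$ is a multiple of $\rho_G$; you use $\mu_\Phi(1,h)$ for $h\neq 1$, which lets you compare coefficients directly in the basis $\Irr_K(G)$, a cosmetic transposition of the same idea. Your sum of squared degrees in place of the paper's norm computation for $c^2=1$, and your deduction of $\bar\Phi(\varphi_G)=\varepsilon\varphi_G$ from $d_G\circ\Phi=\bar\Phi\circ d_G$ applied to $1_G$, likewise match the paper's steps.
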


\begin{proof}
Because $\Phi$ is an isometry of the $\Z$-module $\Z\Irr_K(G)$, there is a permutation $\sigma$ of $\Irr_K(G)$ and signs
$\varepsilon_\chi\in\{\pm1\}$ such that
$\Phi(\chi)=\varepsilon_\chi\sigma(\chi)$.  Consider the $K$-valued class function
$$F_\Phi=\sum_{\chi\in\Irr_K(G)}\chi(1)\Phi(\chi)\in {\rm Cl}_K(G).$$
For $g\in G$ we have $F_\Phi(g)=\mu_\Phi(g,1)$; see (\ref{equ:associated virtual character}). Note that $1$ is the unique $p'$-element of $G$. Consequently, the property \eqref{eq:perfect-character-vanishing} says explicitly
that
$F_\Phi(g)=0$ for any $g\neq 1$.
The regular character $\rho_G$ of $G$ over $K$ has the same vanishing
property, and hence
$F_\Phi=c\rho_G$, where $c=\frac{F_\Phi(1)}{|G|}\in\mathbb{Q}$.
Since the characters $\{\Phi(\chi)\mid \chi\in \Irr_K(G)\}$ form an orthogonal basis of $\Z\Irr(G)$,
$$\langle F_\Phi,F_\Phi\rangle_G
	=\sum_{\chi\in\Irr_K(G)}\chi(1)^2
	=|G|
	=\langle\rho_G,\rho_G\rangle_G.$$
It follows that $c^2=1$, so $c=\pm1$.  Comparing the coefficient of
$\sigma(\chi)$ in $F_\Phi=c\rho_G$ gives
$$\varepsilon_\chi\chi(1)=c\sigma(\chi)(1).$$
Both degrees are positive; therefore
$\varepsilon_\chi=c$ and $\sigma(\chi)(1)=\chi(1)$ for every $\chi$.
Set $\varepsilon=c$. Now we have proved the first statement.

Finally, since $G$ is a $p$-group, $G$ has a unique irreducible Brauer character $\varphi_G$, and
the ordinary decomposition map satisfies $d_G(\chi)=\chi(1)\varphi_G$.  Applying \eqref{eq:ordinary-decomposition-compatibility} to $\Phi$ gives
$$\bar\Phi(\chi(1)\varphi_G)
	=d_G(\Phi(\chi))
	=\varepsilon\sigma(\chi)(1)\varphi_G
	=\varepsilon\chi(1)\varphi_G.$$
Thus $\bar\Phi(\varphi_G)=\varepsilon\varphi_G$.  Since $\sigma$ preserves
degrees, $\Phi(1_G)$ is the common sign $\varepsilon$ times a linear character, proving the last assertion.
\end{proof}

From now on we identify $P$ and $Q_8$ and write $$P=Q_8=\{1, z,\pm\alpha,\pm\beta,\pm\gamma\mid z^2=1, \alpha^2=\beta^2=\gamma^2=\alpha\beta\gamma=z\}.$$

\begin{lemma}\label{lem:q8-source-character}
Let $p=2$. Every linear character $\lambda$ of $P$ takes values in $\{\pm1\}$ and satisfies
\begin{equation}\label{eq:product-linear-character}
	\lambda(z)=1~~~{\rm and}~~~\lambda(\alpha)\lambda(\beta)\lambda(\gamma)=1.
\end{equation}
Moreover, if $\rho_V$ is the ordinary character of the source $V$ from Proposition \ref{prop: M has V as source}, then
\begin{equation}\label{eq:character-v}
	\rho_V=\delta+\theta
\end{equation}
for a linear character $\delta$ of $P$ and the unique irreducible character $\theta$ of $P$ of degree $2$. For any $u\in P$, let $\omega_V(u)$ be the sign of the integer $\rho_V(u)$. We have
\begin{equation}\label{eq:omega-v-values}
	\omega_V(u)=
	\begin{cases}
		-1, & u=z,\\
		\delta(u), & u\in\{\alpha,\beta,\gamma\}.
	\end{cases}
\end{equation}
\end{lemma}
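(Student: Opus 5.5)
The argument has three steps: the linear characters of $P$, the decomposition of $\rho_V$, and the signs $\omega_V$.

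\emph{Linear characters.} The abelianization $P/[P,P]=P/\langle z\rangle$ is elementary abelian of order $4$. Hence every linear character $\lambda$ is trivial on $z$ and takes values in $\{\pm1\}$. Since $\alpha\beta\gamma=z$ and $\lambda$ is multiplicative,
$$\lambda(\alpha)\lambda(\beta)\lambda(\gamma)=\lambda(z)=1.$$

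\emph{Decomposition of $\rho_V$.} By Proposition \ref{prop:V is a source of S}, $V$ is an $\O$-free $\O P$-module of rank $3$, so $\rho_V$ is a character of degree $3$. The irreducible characters of $Q_8$ are four linear characters and the degree-$2$ character $\theta$. So $\rho_V$ is either a sum of three linear characters or $\delta+\theta$ with $\delta$ linear. I rule out the first case using faithfulness. The representation $\rho\colon G\to\GL(S)$ is faithful on $P$, because its reduction $\bar V$ is the inclusion $P\le\GL_3(\F_4)$ (Proposition \ref{prop:restriction of simple kG-module}). A sum of linear characters has $z$ in its kernel, since every linear character is trivial on $z$. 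Hence $\rho_V$ must contain $\theta$, which gives \eqref{eq:character-v}.

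As an alternative check, Proposition \ref{prop:restriction of simple kG-module} gives $\Res_{Z(P)}^P(\bar V)\cong k\oplus kZ(P)$. Over $K$, the corresponding character of $Z(P)$ is $1+1+\mathrm{sgn}$ or some other combination with $\rho_V(z)\neq3$. In fact $\rho_V(z)=3$ would force $\rho(z)=\Id$, again contradicting faithfulness.

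\emph{Values and signs.} The values of $\theta$ are $\theta(z)=-2$ and $\theta(\alpha)=\theta(\beta)=\theta(\gamma)=0$. Together with $\delta(z)=1$ this gives
$$\rho_V(z)=1-2=-1,\qquad \rho_V(u)=\delta(u)\in\{\pm1\}\quad\text{for }u\in\{\alpha,\beta,\gamma\}.$$
Taking signs yields \eqref{eq:omega-v-values}. (Values at $-\alpha$ and the like follow identically, though they are not asked for.)

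The only step that needs care is excluding the case of three linear characters. I handle it by the faithfulness of $V$, lifted from $\bar V$: a nontrivial kernel over $K$ would give a nontrivial kernel mod $\mathfrak m$, since an element acting as the identity on $V$ also acts as the identity on $k\otimes_\O V$. Everything else is a character table lookup.
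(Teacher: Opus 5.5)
Your proof is correct, but it excludes the case ``$\rho_V$ is a sum of three linear characters'' by a different mechanism than the paper.

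The paper never returns to the explicit matrix realization at this point. Instead it evaluates the characters of both sides of the endotrivial decomposition $\End_\O(V)\cong\O\oplus\O P$ at $z$. This gives $\rho_V(z)^2=1$, which is incompatible with the value $\rho_V(z)=3$ forced by three linear characters.

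You instead use that $\bar V$ is the natural faithful representation of $P\le\GL_3(\F_4)$, so $V$ is faithful. If $\rho_V$ were a sum of linear characters, $z$ would act trivially on $K\otimes_\O V$, hence on $V$ and on $k\otimes_\O V$, which is impossible.

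Both arguments are elementary, but they rest on different hypotheses. The paper's argument uses only that $V$ is an endotrivial $\O P$-lattice of rank $3$, so it applies to any such lattice however it is constructed. Yours uses only faithfulness, so it applies to any faithful rank-$3$ lattice, endotrivial or not.

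One small correction concerns your ``alternative check''. The restriction of $\rho_V$ to $Z(P)$ is $1+2\,\mathrm{sgn}$, which is consistent with your own later value $\rho_V(z)=-1$. It is not $1+1+\mathrm{sgn}$. That aside adds nothing beyond the faithfulness argument and can simply be dropped.
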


\begin{proof}
Note that $[P,P]=\langle z\rangle$ and $P/[P,P]$ is a Klein four group. Hence every linear character is trivial
on $z$, has values in $\{\pm1\}$, and
$\lambda(\alpha)\lambda(\beta)\lambda(\gamma)=\lambda(\alpha\beta\gamma)=\lambda(z)=1$.

From the character table of $P$, we see that the four linear characters of $P$ all take the value $1$ at $z$.  The
remaining irreducible character $\theta$ has degree $2$ and satisfies
$$\theta(z)=-2~~~{\rm and}~~~\theta(\alpha)=\theta(\beta)=\theta(\gamma)=0.$$
Comparing the character values of the element $z$ on both sides of the isomorphism \eqref{eq:end-v}, we obtain $\rho_V(z)^2=1$.  Since $\rho_V(1)=3$, its decomposition into
irreducible characters is either the sum of three linear
characters or the sum of one linear character and $\theta$.  The first
possibility would give $\rho_V(z)=3$, contrary to
$\rho_V(z)^2=1$.  Thus \eqref{eq:character-v} holds. The equality \eqref{eq:omega-v-values} is a straightforward consequence of \eqref{eq:character-v}.
\end{proof}

\begin{notation}
{\rm Keep Notation \ref{notation: fusion system in our case}. We now recall some notation from \cite[Theorem 1.4]{H26:isotypy}.  For every nontrivial subgroup $Q$ of $P$, let $M_Q$ be a $(\Delta Q,e_Q\otimes 1)$-slashed module attached to $M$ over the group $C_G(Q)\times C_P(Q)$. Let $\hat{M}_Q$ be any $\O C_G(Q)\hat{e}_Q$-$\O C_P(Q)$-bimodule with an endopermutation $\Delta C_P(Q)$-source $\hat{V}_Q$ of determinant $1$, such that $k\otimes_\O\hat{M}_Q\cong M_Q$ and that $\hat{M}_Q$ induces a Morita equivalence between $\O C_G(Q)\hat{e}_Q$ and $\O C_P(Q)$. See \cite[Proposition 5.5]{H26:isotypy} for the existence of $\hat{M}_Q$. For $Q=1$, set $\hat M_1=M$ and $\hat V_1=V$. For any $Q\leq P$, denote by $\rho_{\hat{V}_Q}$ the character of $\hat{V}_Q$ and for any $u\in C_P(Q)$, denote by $\omega_{\hat{V}_Q}(u)$ the sign of $\rho_{\hat{V}_Q}((u,u))$. Note that by \cite[Proposition 3.2]{H26:isotypy}, the values of $\rho_{\hat{V}_Q}$ are in $\Z$, so this makes sense. Now for any subgroup $Q$ of $P$, the bimodule $\hat M_Q$
induces a perfect isometry
\begin{equation*}
	\Phi_{\hat M_Q}:\Z\Irr_K(C_P(Q),1)\cong \Z\Irr_K(C_G(Q),\hat e_Q).
\end{equation*}
By \cite[Theorem 1.4 (ii) and equation (7.3)]{H26:isotypy}, the family $(\Phi_{\hat{M}_Q})_{\{Q\leq P\}}$ of perfect isometries forms an almost global isotypy and satisfies: for any $u\in C_P(Q)$ and $R=Q\langle u\rangle$,
\begin{equation}\label{eq:canonical-almost-compatibility}
	d_{G,Q}^{u}\circ\Phi_{\hat M_Q}
	=\omega_{\hat V_Q}(u)\bar\Phi_{\hat M_R}\circ d_{P,Q}^{u}.
\end{equation}

For later use, we spell out two instances of
\eqref{eq:canonical-almost-compatibility}.  For $Q=1$ it gives
\begin{equation}\label{eq:canonical-global-decomposition}
	d_{G,1}^{u}\circ\Phi_M
	=\omega_V(u)\bar\Phi_{\hat M_{\langle u\rangle}}
		\circ d_{P,1}^{u}
	\qquad(u\in P).
\end{equation}
Set $Z=\langle z\rangle$.  The source $\hat V_Z$ has $\O$-rank $1$ by the
proof of \cite[Proposition 1.5]{H26:isotypy}.  Since
$C_P(Z)=P$, it affords a linear character $\eta$ of $P$.  Consequently, (\ref{eq:canonical-almost-compatibility}) gives
\begin{equation}\label{eq:canonical-z-decomposition}
	d_{G,Z}^{u}\circ\Phi_{\hat M_Z}
	=\eta(u)\bar\Phi_{\hat M_{\langle u\rangle}}
		\circ d_{P,Z}^{u}
	\qquad(u\in\{\alpha,\beta,\gamma\}).
\end{equation}
In particular, $\eta$ satisfies \eqref{eq:product-linear-character}.
}
\end{notation}

Now we start to prove Theorem \ref{theo:main} (ii):

\begin{proposition}\label{prop:no-strong-isotypy}
Keep Notation \ref{notation: fusion system in our case}. There is no global isotypy between $B$ and $\O P$ in the sense of
Definition~\ref{def:strong-isotypy-specialised}.
\end{proposition}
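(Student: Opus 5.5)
Suppose, for a contradiction, that $(\Phi_Q)_{Q\leq P}$ is a global isotypy from $\O P$ to $B$ in the sense of Definition~\ref{def:strong-isotypy-specialised}. The plan is to compare it with the canonical almost global isotypy $(\Phi_{\hat M_Q})_{Q\leq P}$ of the preceding Notation. The comparison will produce a sign identity indexed by the three elements $\alpha,\beta,\gamma$. Multiplying these three identities will contradict \eqref{eq:product-linear-character}, because $\omega_V(z)=-1$ by \eqref{eq:omega-v-values}.

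\textbf{Step 1 (reduction to self-isometries).} For each $Q\leq P$ put $\Psi_Q:=\Phi_{\hat M_Q}^{-1}\circ\Phi_Q$. This is a perfect self-isometry of $\Z\Irr_K(C_P(Q))$, since inverses and composites of perfect isometries are perfect. It also satisfies $\bar\Psi_Q=\bar\Phi_{\hat M_Q}^{-1}\circ\bar\Phi_Q$, because the induced maps on Brauer characters are determined by \eqref{eq:ordinary-decomposition-compatibility} and the decomposition maps are surjective after tensoring with $K$. By Lemma~\ref{lem:perfect-self-isometry-two-group} there are a sign $\varepsilon_Q$ and a linear character $\lambda_Q$ of $C_P(Q)$ such that
\[
\Psi_Q(1_{C_P(Q)})=\varepsilon_Q\lambda_Q
\quad\text{and}\quad
\bar\Psi_Q(\varphi_{C_P(Q)})=\varepsilon_Q\varphi_{C_P(Q)}.
\]

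\textbf{Step 2 (the key sign identity).} Fix $Q\leq P$, $u\in C_P(Q)$ and $R=Q\langle u\rangle$. Apply both compatibility relations to the trivial character $1=1_{C_P(Q)}$.

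On one side, \eqref{eq:strong-isotypy-compatibility} gives
\[
d^u_{G,Q}\Phi_Q(1)=\bar\Phi_{\hat M_R}\bar\Psi_R\,d^u_{P,Q}(1)=\varepsilon_R\,\bar\Phi_{\hat M_R}(\varphi_R),
\]
using \eqref{eq:decomposition-linear-character} for the last equality.

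On the other side, \eqref{eq:canonical-almost-compatibility} gives
\[
d^u_{G,Q}\Phi_{\hat M_Q}(\Psi_Q(1))=\omega_{\hat V_Q}(u)\,\varepsilon_Q\,\bar\Phi_{\hat M_R}\bigl(d^u_{P,Q}\lambda_Q\bigr)=\omega_{\hat V_Q}(u)\,\varepsilon_Q\,\lambda_Q(u)\,\bar\Phi_{\hat M_R}(\varphi_R).
\]

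The two left-hand sides are equal because $\Phi_Q=\Phi_{\hat M_Q}\circ\Psi_Q$, and $\bar\Phi_{\hat M_R}$ is injective. Hence
\[
\varepsilon_R=\omega_{\hat V_Q}(u)\,\varepsilon_Q\,\lambda_Q(u).
\]

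\textbf{Step 3 (specialising and reaching the contradiction).} Specialise the identity of Step 2 to three cases, using \eqref{eq:omega-v-values} for $Q=1$ and \eqref{eq:canonical-z-decomposition} for $Q=Z$.

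\begin{enumerate}[{\rm (a)}]
\item With $Q=1$ and $u=z$: $\varepsilon_Z=-\varepsilon_1$, since $\omega_V(z)=-1$ and $\lambda_1(z)=1$.
\item With $Q=1$ and $u\in\{\alpha,\beta,\gamma\}$: $\varepsilon_{\langle u\rangle}=\delta(u)\lambda_1(u)\varepsilon_1$.
\item With $Q=Z$ and $u\in\{\alpha,\beta,\gamma\}$: here $R=\langle u\rangle$ because $z=u^2$, so $\varepsilon_{\langle u\rangle}=\eta(u)\lambda_Z(u)\varepsilon_Z=-\eta(u)\lambda_Z(u)\varepsilon_1$.
\end{enumerate}

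Comparing (b) and (c) gives
\[
(\delta\lambda_1\eta\lambda_Z)(u)=-1\quad\text{for } u=\alpha,\beta,\gamma.
\]
The left-hand side is a linear character of $P$, so by \eqref{eq:product-linear-character} its product over $u=\alpha,\beta,\gamma$ equals $1$. The product of the right-hand sides is $(-1)^3=-1$. This contradiction shows that no global isotypy exists.

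The main obstacle is Step 1. One must justify carefully that $\Psi_Q$ is a perfect isometry, so that Lemma~\ref{lem:perfect-self-isometry-two-group} applies, and that $\bar\Psi_R$ is compatible with $\bar\Phi_R=\bar\Phi_{\hat M_R}\circ\bar\Psi_R$ on $K\otimes_\Z\Z\IBr$. Once these points are secured, the rest is the sign bookkeeping above.
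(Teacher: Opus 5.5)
Your proof is correct and follows the paper's argument essentially step by step. You normalise the putative global isotypy by the canonical almost global isotypy $(\Phi_{\hat M_Q})_{Q\leq P}$, extract the signs $\varepsilon_Q$ and linear characters $\lambda_Q$ from Lemma~\ref{lem:perfect-self-isometry-two-group}, evaluate both compatibility relations at the trivial character for $(Q,u)=(1,z)$, $(1,u)$ and $(Z,u)$ with $u\in\{\alpha,\beta,\gamma\}$, and multiply to contradict \eqref{eq:product-linear-character}. The differences are only cosmetic: you derive the sign identity for general $Q$ before specialising, you combine the four characters into the single linear character $\delta\lambda_1\eta\lambda_Z$, and your $\Phi_Q$ and $\Psi_Q$ play the roles of the paper's $\Psi_Q$ and $T_Q$.
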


\begin{proof}
Suppose that $(\Psi_Q)_{Q\leq P}$ is such a global isotypy. 
For every $Q\leq P$, define
\begin{equation}\label{eq:normalising-self-isometry}
	T_Q=(\Phi_{\hat{M}_Q})^{-1}\circ\Psi_Q.
\end{equation}
Since perfect isometries are closed under inverse and composition, $T_Q:  \Z\Irr_K(C_P(Q))\to  \Z\Irr_K(C_P(Q))$
is a perfect self-isometry.  By
Lemma~\ref{lem:perfect-self-isometry-two-group}, there is a sign
$\varepsilon_Q\in\{\pm1\}$ such that $\bar{T}_Q(\varphi_Q)=\varepsilon_Q\varphi_Q$, where $\varphi_Q$ is the unique irreducible Brauer character of $C_P(Q)$.  Since
$C_P(1)=C_P(Z)=P$, the same lemma \ref{lem:perfect-self-isometry-two-group} gives linear characters
$\lambda_1$ and $\lambda_Z$ of $P$ such that
\begin{equation}\label{eq:t-one-and-z}
	T_1(1_P)=\varepsilon_1\lambda_1,
	\qquad
	T_Z(1_P)=\varepsilon_Z\lambda_Z.
\end{equation}

We first use the compatibility equation with $Q=1$.  From
\eqref{eq:normalising-self-isometry},
$\Psi_Q=\Phi_{\hat M_Q}\circ T_Q$.  Hence the global isotypy equation
\eqref{eq:strong-isotypy-compatibility} reads
$$	d_{G,1}^{u}\circ\Phi_M\circ T_1
	=\bar\Phi_{\hat M_{\langle u\rangle}}
		\circ\bar T_{\langle u\rangle}\circ d_{P,1}^{u}.$$
Substituting \eqref{eq:canonical-global-decomposition} and cancelling
the isomorphism $\bar\Phi_{\hat M_{\langle u\rangle}}$ gives the equality of
maps
\begin{equation*}
	\omega_V(u)d_{P,1}^{u}\circ T_1
	=\bar T_{\langle u\rangle}\circ d_{P,1}^{u}.
\end{equation*}
Apply this equality to the trivial character $1_P$ of $P$.  By
\eqref{eq:decomposition-linear-character} and
\eqref{eq:t-one-and-z}, its left side is
$$\omega_V(u)\varepsilon_1\lambda_1(u)
	\varphi_{\langle u\rangle},$$
whereas its right side is
$\varepsilon_{\langle u\rangle}\varphi_{\langle u\rangle}$.  Therefore
\begin{equation}\label{eq:first-sign-relation}
	\varepsilon_{\langle u\rangle}
	=\varepsilon_1\omega_V(u)\lambda_1(u)
	\qquad(u\in P).
\end{equation}
Taking $u=z$ in (\ref{eq:first-sign-relation}) and using Lemma~\ref{lem:q8-source-character} gives
\begin{equation}\label{eq:epsilon-z}
	\varepsilon_Z=-\varepsilon_1.
\end{equation}
 For $u\in\{\alpha,\beta,\gamma\}$, the equality (\ref{eq:first-sign-relation}) and Lemma~\ref{lem:q8-source-character} give
\begin{equation}\label{eq:epsilon-order-four}
	\varepsilon_{\langle u\rangle}
	=\varepsilon_1\delta(u)\lambda_1(u).
\end{equation}

For $Q=Z$ and $u\in\{\alpha,\beta,\gamma\}$, we have
$Z\langle u\rangle=\langle u\rangle$.  Thus the global isotypy
compatibility condition \eqref{eq:strong-isotypy-compatibility} is
\begin{equation}\label{equ:compatibility for Z}
	d_{G,Z}^{u}\circ\Psi_Z =\bar\Psi_{\langle u\rangle}\circ d_{P,Z}^{u}.
	\end{equation}
By \eqref{eq:normalising-self-isometry}, we have
$$\Psi_Z=\Phi_{\hat M_Z}\circ T_Z,~~~{\rm and}~~~\bar{\Psi}_{\langle u\rangle}=\bar\Phi_{\hat M_{\langle u\rangle}}\circ\bar T_{\langle u\rangle}.$$
Substitute these two expressions into (\ref{equ:compatibility for Z}), we have
$$	d_{G,Z}^{u}\circ\Phi_{\hat M_Z}\circ T_Z =\bar\Phi_{\hat M_{\langle u\rangle}}\circ\bar T_{\langle u\rangle}\circ d_{P,Z}^{u}.$$
Equation \eqref{eq:canonical-z-decomposition} then replaces
$d_{G,Z}^{u}\circ\Phi_{\hat M_Z}$ on the left side by
$\eta(u)\bar\Phi_{\hat M_{\langle u\rangle}}\circ d_{P,Z}^{u}$, giving
$$\eta(u)\bar\Phi_{\hat M_{\langle u\rangle}}
		\circ d_{P,Z}^{u}\circ T_Z
	=\bar\Phi_{\hat M_{\langle u\rangle}}
		\circ\bar T_{\langle u\rangle}\circ d_{P,Z}^{u}.$$
Cancelling the isomorphism $\bar\Phi_{\hat M_{\langle u\rangle}}$ gives
\begin{equation}\label{eq:normalised-z-compatibility}
	\eta(u)d_{P,Z}^{u}\circ T_Z
	=\bar T_{\langle u\rangle}\circ d_{P,Z}^{u}.
\end{equation}
We now evaluate both sides of
\eqref{eq:normalised-z-compatibility} at the trivial character $1_P$.
Recall from \eqref{eq:t-one-and-z} that $\lambda_Z$ is the linear character of $P=C_P(Z)$ determined by $T_Z(1_P)=\varepsilon_Z\lambda_Z$.
By \eqref{eq:decomposition-linear-character}, the left side is
$$\eta(u)d_{P,Z}^{u}\bigl(T_Z(1_P)\bigr)=\varepsilon_Z\eta(u)d_{P,Z}^{u}(\lambda_Z)=\varepsilon_Z\eta(u)\lambda_Z(u)\varphi_{\langle u\rangle}.$$
Here $\varphi_{\langle u\rangle}$ is the unique irreducible Brauer
character of $C_P(\langle u\rangle)$.
The right side is 
$$\bar T_{\langle u\rangle}\bigl(d_{P,Z}^{u}(1_P)\bigr)
	=\bar T_{\langle u\rangle}(\varphi_{\langle u\rangle})
	=\varepsilon_{\langle u\rangle}\varphi_{\langle u\rangle}.$$
Comparing the coefficients of $\varphi_{\langle u\rangle}$ gives
\begin{equation}\label{eq:second-sign-relation}
	\varepsilon_{\langle u\rangle}
	=\varepsilon_Z\eta(u)\lambda_Z(u).
\end{equation}
Combining \eqref{eq:epsilon-z}, \eqref{eq:epsilon-order-four} and
\eqref{eq:second-sign-relation}, and using $\eta(u)^{-1}=\eta(u)$, we
obtain
\begin{equation*}
	\lambda_Z(u)=-\delta(u)\lambda_1(u)\eta(u) \qquad(u\in\{\alpha,\beta,\gamma\}).
\end{equation*}
It follows that
$$\lambda_Z(\alpha)\lambda_Z(\beta)\lambda_Z(\gamma)=-\delta(\alpha)\delta(\beta)\delta(\gamma)\lambda_1(\alpha)\lambda_1(\beta)\lambda_1(\gamma)\eta(\alpha)\eta(\beta)\eta(\gamma).$$
Applying
\eqref{eq:product-linear-character} to $\lambda_Z$, $\delta$, $\lambda_1$ and $\eta$, the left side is $1$, while the right side is $-1$.  This contradiction proves
the proposition and hence Theorem \ref{theo:main} (ii).
\end{proof}

\begin{remark}
{\rm In the proof of Proposition \ref{prop:no-strong-isotypy}, we only used the compatibility condition (ii) of Definition \ref{def:strong-isotypy-specialised} - the equivariance condition (i) was not used.
}
\end{remark}

\section{A counter-example to Statement \ref{statement:Brauer pair problem}}\label{sec:counterexample}

Let $p=3$, and let $Q=3_{+}^{\,1+2}=\langle x,y,z\mid x^3=y^3=z^3=1,\ [x,y]=z,\ [x,z]=[y,z]=1\rangle$. 
Thus $Z(Q)=\langle z\rangle$.
Define an automorphism $\tau\in\operatorname{Aut}(Q)$ by
$$\tau(x)=x^{-1},~~~\tau(y)=y^{-1},~~~{\rm and}~~~\tau(z)=z.$$ 
Since $z\in Z(Q)$, we have
$[x^{-1},y^{-1}]=[x,y]=z$, so the defining relations are preserved and $\tau$ is indeed an automorphism of $Q$. Moreover, $\tau^2=1$.
Now we let $$G=Q\rtimes\langle a\rangle,$$ where conjugation by $a$ induces $\tau$ on $Q$.
We claim that
$$C_G(Q)=Z(Q)=\langle z\rangle.$$
Clearly $Z(Q)\le C_G(Q)$. On the other hand, if an element $ua$, with $u\in Q$, centralizes $Q$, then the inner automorphism induced by $ua$ on $Q$ would be trivial. Hence $\tau$ would be an inner automorphism of $Q$. However, every inner automorphism of $Q$ acts trivially on $Q/Z(Q)$,
whereas $\tau$ acts not trivially on $Q/Z(Q)$. This is impossible. Therefore $C_G(Q)=Z(Q)$.
Since $C_G(Q)=Z(Q)$ is a $3$-group, the group algebra $kC_G(Q)$ has a unique block, namely $e:=1$.
Since \(Q\trianglelefteq G\) and \(e=1\), we have $N_G(Q,e)=G$. Now we set
$$H=\langle z,a\rangle.$$
Since $\tau(z)=z$, the elements $z$ and $a$ commute, and therefore
$H\cong\langle z\rangle\times\langle a\rangle$. In particular, $C_G(Q)=\langle z\rangle\le H\le N_G(Q,e)=G$.
However, $e=1$ is not a block of $kH$, because $kH\cong k\langle z\rangle\otimes_k k\langle a\rangle$ and $k\langle a\rangle$ has two blocks.

\bigskip\noindent\textbf{Acknowledgements.} The author acknowledges supported from NSFC (12501024, 12471016), China Postdoctoral Science Foundation (GZC20252006, 2025T001HB) and China Scholarship Council (202506770066).

\bigskip
{\footnotesize School of Mathematics and Statistics, Central China Normal University, Wuhan 430079, China 
	
	Email address: xinhuang@mails.ccnu.edu.cn}

\end{document}